\newtheorem{theorem}{Theorem}
\newtheorem{lemma}[theorem]{Lemma}
\newtheorem{prop}[theorem]{Proposition}
\newtheorem{corollary}[theorem]{Corollary}
\newtheorem*{theorem*}{Theorem}
\newtheorem*{lemma*}{Lemma}
\theoremstyle{definition}
\newtheorem{definition}{Definition}
\newtheorem{remark}{Remark}
\newtheorem*{remark*}{Remark}
\newtheorem*{example*}{Example}
\newtheorem*{er*}{Examples and Remarks}
\newcommand{\nrm}[1]{\Vert#1\Vert}
\newcommand{\nnrm}[1]{{\vert\kern-0.25ex\vert\kern-0.25ex\vert #1 \vert\kern-0.25ex\vert\kern-0.25ex\vert}}
\newcommand{\supp}{{\mathrm{supp}}\,}
\newcommand{\lap}{\Delta}
\newcommand{\ud}{\mathrm{d}}
\newcommand{\rd}{\partial}
\newcommand{\0}{\emptyset}
\newcommand{\eps}{\epsilon}
\begin{document}

\title[vortex-stretching]
{A remark on the zeroth law and instantaneous vortex stretching 
on the incompressible 3D Euler equations} 
 
\author{In-Jee Jeong}
\address{Department of Mathematics, Korea Institute for Advanced Study}
\email{ijeong@kias.re.kr}

\author{Tsuyoshi Yoneda} 
\address{Graduate School of Mathematical Sciences, University of Tokyo, Komaba 3-8-1 Meguro, 
Tokyo 153-8914, Japan } 
\email{yoneda@ms.u-tokyo.ac.jp} 

\subjclass[2010]{Primary 35Q35; Secondary 35B30; Tertiary 76F99} 

\date{\today} 

\keywords{Euler equations, vortex-stretching, energy cascade, Zeroth law} 

\begin{abstract} 
By DNS of Navier-Stokes turbulence, Goto-Saito-Kawahara (2017) showed that turbulence consists of a self-similar hierarchy of anti-parallel pairs of vortex tubes, in particular, stretching in larger-scale strain fields creates smaller-scale vortices. Inspired by their numerical result, we examine the Goto-Saito-Kawahara type of vortex-tubes behavior using the 3D incompressible Euler equations, and show that such behavior induces energy cascade in the absence of nonlinear scale-interaction. From this energy cascade, we prove a modified version of the zeroth-law. In the Appendix, we derive Kolmogorov's $-5/3$-law from the GSK point of view. 
\end{abstract}

\maketitle

\section{Introduction}

An important problem in the study of turbulence is to figure out a concrete 
picture of the self-similar hierarchy and clarify its mechanism, that is,
the energy cascade dynamics.
Goto-Saito-Kawahara \cite{GSK} (GSK) qualitatively 
examined that sustained turbulence consists of a hierarchy of antiparallel
pairs of vortex tubes. 
The main problem was in isosurface visualizations of vortices.
To overcome this difficulty, they used the low-pressure method proposed by Miura-Kida \cite{MK1,MK2}.
This threshold-free method identifies the axis of a vortex tube using the fact
that the pressure field take a local minimum at the center of a vortex tube 
on the plane perpendicular to it.
Using this method, they showed that vortex tubes at any inertial length 
scale tend to form antiparallel pairs with any type 
of large-scale forces, 
and 
 no qualitative difference in
these structures.
\begin{itemize}
\item
The first conclusion of GSK is that
 turbulence
(in the inertial length scales) is composed of hierarchy of vortex tubes with different sizes.
\end{itemize}
GSK also observed that the hierarchy is mainly created by vortex stretching.
Smaller
vortices are created around larger vortices, and 
their finding is that not a single vortex tube but antiparallel pairs of them effectively stretch
and create smaller-scale vortices at each hierarchical level.
They also emphasize that this antiparallel tendency of vortices is
independent of the external force. 
\begin{itemize}
\item
The second conclusion of GSK is that,
at each hierarchy level, vortex tubes tend to form antiparallel pairs and they effectively
stretch and create smaller-scale vortex tubes.
Also stretched vortex tubes tend to align in the direction perpendicular to larger-scale vortex tubes.
\end{itemize}
(The above conclusion is the key in this paper, thus, the same sentences are repeated a few times.)
It is also important to see whether or not such stretching process is local in scale, because  the scale-locality of the energy cascade  
provides 
the  universality of small-scale statistics.
\begin{itemize}
\item
The third conclusion of GSK is that vortices at each hierarchical level are 
most likely to be stretched in strain fields around $2$-$8$ times larger vortices.
\end{itemize}
In this paper we examine GSK-type of vorticity to see the energy cascade mechanism in the fully-nonlinear level. 
In order to clarify (for the readers) such fully-nonlinear framework, 
first, we explain the classical view of the scale-by-scale energy 
cascade in the inertial length scales.
More precisely, we impose linearizing assumptions: scale-locality and space-locality,
and show that
 vortex stretching leads to forward energy cascade.
In order to do so, first let us decompose the velocity field  $u(t,x)$ into mean and fluctuating parts: the mean part $\bar u_K$ whose length-scale is larger than $1/K$ is defined by ($d$ is dimension)
\begin{equation}\label{Gauss}
\text{(coarse-graining)}\quad\bar u_K(t,x):=\int_{\mathbb{R}^d}G_K(r)u(t,x+r)dr,
\end{equation}
where $G$ is the Gaussian function $G(r)=C\exp(-|r|^2)$ with $G_K(r)=K^{d}G(Kr)$. We plug $\bar u_K$ into the usual Navier-Stokes equations:
\begin{equation}\label{NS}
\partial_t u+(u\cdot \nabla) u=-\nabla  p+\nu\Delta u,\quad\text{div}\, u=0,
\end{equation}
and then we obtain
\begin{equation}\label{averaged-NS}
\partial_t\bar u_K+(\bar u_K\cdot \nabla)\bar u_K+\nabla \cdot \tau_K=-\nabla \bar p_K+\nu\Delta\bar u_K
,\quad\text{div}\, \bar u_K=0.
\end{equation}
Here, $\tau_K=\overline{(u\otimes u)}_K-\bar u_K\otimes\bar u_K$ is called stress tensor, which is determined by the fluctuating part whose scale is less than $1/K$.
In the turbulence study field, looking into energy transfer in the inertial range is the most important, and in this case we do not need to see the viscosity effect. Thus we may take $\nu$ sufficiently small enough. We multiply by $\bar u_K$ and integrate on both sides, to obtain 
\begin{equation*}
\begin{split}
\frac{1}{dt}\frac{1}{2}\int_{\mathbb{R}^d}|\bar u_K(t,x)|^2 dx+\nu \int_{\mathbb{R}^d} |\nabla \bar u_K(t,x)|^2dx & = \int_{\mathbb{R}^d} \bar S_K(t,x): \tau_K(t,x)dx \\
& =: -\int_{\mathbb{R}^d}\Pi_K(t,x)dx, 
\end{split}
\end{equation*}
where $\Pi_K$ is called the energy-flux and  $\bar S_K$ is called deformation tensor:
\begin{equation*}
\bar S_K=\frac{1}{2}\left[(\nabla \bar u_K)+(\nabla \bar u_K)^T \right], 
\end{equation*}
where $(\cdot)^T$ represents the corresponding transposed matrix, and
$A:B = \sum_{i,j=1}^da_{ij}b_{ij}$ for $A=(a_{ij})_{i,j=1}^d$ and $B=(b_{ij})_{i,j=1}^d$. If this energy-flux $\Pi_K$ is a positive-valued function, then the large-scale energy
 $\int |\bar u_K(t,x)|^2dx$ decreases, thus this is so-called ``energy cascade" or ``forward energy cascade". To the contrary, if $\Pi_K$ is negative, then $\int |\bar u_K(t,x)|^2dx$ increases, thus this expresses ``inverse energy cascade".
To figure out whether energy cascades forward or inverse,
we need to designate the typical turbulence picture, and 
apply it to the energy-flux $\Pi_K$.
Thus it is widely believed that, we cannot obtain 
any reasonable finite-dimensional approximation of the
fluid equation without such designation of the turbulence picture (it is the so-called ``closure problem").
In the classical manner, at this stage, we linearize the energy-flux $\Pi_K$ using 
scale-locality and space-locality. Let us explain more precisely.
First we decompose $\tau_K$.
To do so, let us define the exact $1/n$-scale flow $u^{[n]}$ as follows:
\begin{equation*}
u^{[n]}=\bar u_{n}-\bar u_{n-1}\quad (n\geq 2),\quad
 u^{[1]}=\bar u_1.
\end{equation*}
Then formally, we obtain 
\begin{equation*}
\tau_K=\sum_{n=1}^\infty\sum_{n'=1}^\infty\overline{(u^{[n]}\otimes u^{[n']})_K}-\overline{(u^{[n]})_K}\otimes \overline{(u^{[n']})_K}.
\end{equation*}
Here we apply the assumption of ``scale-locality"; that is, the
interaction between $u^{[n]}$ and $u^{[n']}$ for $(n\not= n')$ is small enough, and the interaction between $\bar u^{K}$ and $u^{[k]}$ is dominant. Then we can get the following approximation (see \cite{E06-0}):
\begin{equation}\label{stress-tensor-0}
\tau_K\approx \tau^{[k,k]}:=\overline{(u^{[k]}\otimes u^{[k]})_K}-\overline{(u^{[k]})_K}\otimes \overline{(u^{[k]})_K}\quad(k>K).
\end{equation}
By the third conclusion in GSK, we can regard $k\sim bK$ for some $b\in [2,8]$.

\begin{remark}
If we use the Littlewood-Paley decomposition as the coarse-graining, then  the second term in \eqref{stress-tensor-0}
essentially disappears.
In this case, we can regard
\begin{equation}\label{LP-version}
\tau^{[k,k]}=\overline{(u^{[k]}\otimes u^{[k]})_K}.
\end{equation}
We use this kind of approximation in the Appendix.
More precisely, we derive Kolmogorov's $-5/3$-law, from GSK-type 
of self-similar hierarchy.
\end{remark}
The following is just a consequence of direct calculation.
\begin{lemma}[{{Eyink \cite[(2.11)]{E06-0}}}]
Let $\delta u:=\delta u(r;x)=u(x+r)-u(x)$.
Then we have 
\begin{equation}\label{toushiki}
\tau^{[k,k]}=\overline{(\delta u^{[k]}\otimes\delta u^{[k]})_K}-\overline{(\delta u^{[k]})_K}\otimes \overline{(\delta u^{[k]})_K}. 
\end{equation}
\end{lemma}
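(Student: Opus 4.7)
\smallskip

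\noindent\textbf{Proof plan.} The identity is purely algebraic and follows from expanding both sides. My plan is to view the coarse-graining $\bar{(\cdot)}_K$ as the convolution operator with the probability density $G_K(r)$ acting in the shift variable $r$ appearing in $\delta u^{[k]}(r;x)=u^{[k]}(x+r)-u^{[k]}(x)$, and then to track the cancellations.

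First I would expand the tensor product
\begin{equation*}
\delta u^{[k]}\otimes \delta u^{[k]}
= u^{[k]}(x+r)\otimes u^{[k]}(x+r)-u^{[k]}(x+r)\otimes u^{[k]}(x)-u^{[k]}(x)\otimes u^{[k]}(x+r)+u^{[k]}(x)\otimes u^{[k]}(x).
\end{equation*}
Applying $\int G_K(r)\,dr$ term by term, using $\int G_K(r)\,dr=1$ and the fact that $u^{[k]}(x)$ is constant in $r$, I obtain
\begin{equation*}
\overline{(\delta u^{[k]}\otimes \delta u^{[k]})_K}
= \overline{(u^{[k]}\otimes u^{[k]})_K}-\overline{(u^{[k]})_K}\otimes u^{[k]}-u^{[k]}\otimes \overline{(u^{[k]})_K}+u^{[k]}\otimes u^{[k]}.
\end{equation*}

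Next I would compute the subtracted term. By the same linearity,
$\overline{(\delta u^{[k]})_K}=\overline{(u^{[k]})_K}-u^{[k]}$, so expanding gives
\begin{equation*}
\overline{(\delta u^{[k]})_K}\otimes \overline{(\delta u^{[k]})_K}
= \overline{(u^{[k]})_K}\otimes \overline{(u^{[k]})_K}-\overline{(u^{[k]})_K}\otimes u^{[k]}-u^{[k]}\otimes \overline{(u^{[k]})_K}+u^{[k]}\otimes u^{[k]}.
\end{equation*}

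Finally I would subtract: the four cross-terms involving one filtered and one unfiltered factor of $u^{[k]}$ cancel between the two expressions, as does the $u^{[k]}\otimes u^{[k]}$ term, leaving exactly $\overline{(u^{[k]}\otimes u^{[k]})_K}-\overline{(u^{[k]})_K}\otimes \overline{(u^{[k]})_K}=\tau^{[k,k]}$, which is \eqref{toushiki}. There is no substantive obstacle here; the only point to be careful about is that $G_K$ is normalized so that the constant-in-$r$ quantity $u^{[k]}(x)$ passes through the coarse-graining unchanged, and that the bilinearity of $\otimes$ is applied consistently on the left and right factors.
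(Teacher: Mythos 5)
Your proposal is correct and is exactly the ``direct calculation'' the paper alludes to (following Eyink): interpreting the coarse-graining of $\delta u^{[k]}(r;x)$ as integration against $G_K(r)$ in the separation variable $r$, expanding both tensor products bilinearly, and using $\int G_K(r)\,dr=1$ so that the cross terms and the $u^{[k]}\otimes u^{[k]}$ term cancel, leaving $\overline{(u^{[k]}\otimes u^{[k]})_K}-\overline{(u^{[k]})_K}\otimes\overline{(u^{[k]})_K}=\tau^{[k,k]}$. No gaps; the one interpretive point you flag (normalization of $G_K$ and the meaning of the overline acting on the $r$-dependence) is precisely the right thing to be careful about.
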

Here we also apply the another assumption of ``space-locality"; that is, for each $\delta u^{[n]}$, we approximate it to the $m$-th order Taylor expansion:
\begin{equation*}
\delta u^{(k,m)}(r;x):=
\sum_{p=1}^m\frac{1}{p!}\left(r_1\frac{\partial}{\partial x_1}+\cdots+r_d\frac{\partial}{\partial x_d}\right)^pu^{[k]}(x)
\end{equation*}
(see Eyink \cite[Section 3.2.2]{E06-0}).
Then we have
\begin{equation*}
\tau^{[k,k]}\approx \tau^{(k,m)}
:=\overline{(\delta u^{(k,m)}\otimes\delta u^{(k,m)})_K}-\overline{(\delta u^{(k,m)})_K}\otimes \overline{(\delta u^{(k,m)})_K},
\end{equation*}
and for $m = 1$, we obtain the following first-order approximation:
\begin{equation}\label{first-order}
\tau_{ij}^{(k,1)}
=
C_K\sum_{h=1}^d\frac{\partial u_i^{[k]}}{\partial x_h}\frac{\partial u_j^{[k]}}{\partial x_h}
\end{equation}
for some positive constant $C_K>0$ depending only on $K$.
 To calculate the energy-flux $\Pi_K$ using the above approximation \eqref{first-order}, 
 we use GSK-type of turbulence picture.
In particular, to set up the small-scale and large scale flows, we follow the second conclusion in GSK: strongly stretched vortex tubes tend to align in the direction perpendicular to larger-scale vortex tubes.
Let $r=\sqrt{x_1^2+x_2^2}$ and examine
small-scale vortex tube $u^{[k]}$ as follows:
\begin{equation}\label{classical-small-scale}
\bar u^{[k]}(x)=
\begin{cases}
\begin{pmatrix}
x_2\\-x_1\\0
\end{pmatrix}
\quad (r\le k^{-1}), \\
\dfrac{1}{(rk)^2} 
\begin{pmatrix}
x_2\\-x_1\\0 
\end{pmatrix}
\quad (r>k^{-1}).
\end{cases}
\end{equation}
Also we examine large scale strain velocity $\bar u_K$ as 
\begin{equation}\label{classical-large-scale}
\bar u_K(x)=
\begin{pmatrix}
0\\
- x_2\\
x_3
\end{pmatrix}.
\end{equation}
In this setting  we do not put any $K$-factor in the definition (since it is not important anymore).
This $\bar u_K$ represents anti-parallel vortex tube in the $x_1$-direction.
Then we have
\begin{equation}\label{flux-matrix}
\tau^{(k,1)}(x)
=
C_K
\begin{pmatrix}
1 &0&0\\
0 &1&0\\
0& 0&0
\end{pmatrix}
\times
\begin{cases}
1\quad & (r\leq k^{-1})\\
1/(rk)^4\quad & (r> k^{-1}).
\end{cases}
\end{equation}
This \eqref{flux-matrix} is an entirely positive function and since
\begin{equation*}
\bar S_K(x)=
\begin{pmatrix}
0& 0&0\\
0&-1&0\\
0&0& 1
\end{pmatrix},
\end{equation*}
we see 
\begin{equation}\label{forward energy cascade}
\Pi_K(x)=-\tau^{(k,1)}(x):\bar S_K(x)>0.
\end{equation}
This exhibits forward
energy cascade.
However, in the above classical observation, we are already imposing linearizing 
approximations: scale-locality and space-locality. 
Thus our basic question is that what is happening in the small-scale large-scale interaction in the fully-nonlinear level.
To answer this question, we 
also focus on the second conclusion in GSK: stretched small-scale vortex tubes tend to align in the direction perpendicular to larger-scale vortex tubes, and employ the incompressible  3D Euler equations.
Also, from this consideration, we try to derive a ``modified zeroth-law" using stretching of certain antiparallel pairs of vorticity, which could be a
useful approach to look into the actual zeroth-law in the future.
The zeroth-law states that, in the limit of the vanishing viscosity, the kinetic energy  
dissipation becomes nonzero; that is, for the solution $u$ (depending on $\nu$)  to the Navier-Stokes equations \eqref{NS}, 
\begin{equation*}
\lim_{\nu\to 0}\nu\langle|\nabla u|^2\rangle=\epsilon>0,
\end{equation*}
where $\langle\cdot\rangle$ is some averaging (in this paper we regard it as both space and time averages).
Laboratory experiments and numerical simulations of turbulence both confirm 
the above zeroth-law (see Eyink \cite{E3}). Thus it is widely believed that the zeroth-law is the cornerstone 
in the turbulence study field (for a recent development related to  the zeroth-law, see Drivas \cite{Dr}).

\section{The 3D vorticity equations and main theorems.}

The 3D vorticity equations (equivalently, the 3D Euler equations) take the following form: 
\begin{equation*}
\begin{split}
\partial_t\omega + (u\cdot\nabla) \omega = (\omega\cdot \nabla) u,\quad x \in \mathbb{T}^3 := (\mathbb{R}/2\mathbb{Z})^3 
\end{split}
\end{equation*} where the velocity $u$ is determined by the (periodic) 3D Biot-Savart law: \begin{equation*}
\begin{split}
u(t,x) = \int_{\mathbb{T}^3} K_3 \large( x - y \large) \omega(t,y) \, dy, 
\end{split}
\end{equation*} with \begin{equation*}
\begin{split}
K_3(x)v  = \frac{1}{4\pi} \frac{x \times v}{|x|^3} \quad (\text{with reflections}).
\end{split}
\end{equation*} The associated Lagrangian flow is then given by \begin{equation*}
\begin{split}
\partial_t \Phi(t,x)=u(t,\Phi(t,x))\quad\text{with}\quad \Phi(0,x)=x\in\mathbb{T}^3.
\end{split}
\end{equation*} 
We shall examine smooth initial vorticity of the form $\omega_{0,n} = \omega_{0,n}^L + \omega_{0,n}^S$ for $\omega^L_{0,n}\in C^\infty_{c}$ and $\omega^S_{0,n}\in C^\infty_{c}$,
and we  restrict them to the following symmetry (with a slight abuse of notation):
\begin{equation*}
\omega^L_{0,n}=(0,0,\omega_{0,n}^L(x_1,x_2))\quad
\text{and}\quad 
\omega_{0,n}^S=(\omega_{0,n,1}^S(x_1,x_2), \omega_{0,n,2}^S(x_1,x_2),0)
\end{equation*}
(c.f. \eqref{classical-small-scale} and \eqref{classical-large-scale}).
The corresponding solution also keeps this symmetry.
This setting has been inspired by the second conclusion in GSK.
Let $\omega_{n}(t)$ be the corresponding solution to $\omega_{0,n}$. By the Biot-Savart law,
\begin{eqnarray*}
\nonumber
u_{n}(t,x)
= 
\int_{\mathbb{T}^3} K_3\big(x - y \big) \omega_{n}(t,y) \, dy,\
\partial_t \Phi_{n}(t,x)=u_{n}(t,\Phi_{n}(t,x))
\end{eqnarray*} 
and then
\begin{equation*} 
\omega_{n}(t,\Phi_{n}(t,x)) 
= 
 D\Phi_{n}(t,x) \omega_{0,n}(x)
=D\Phi_{n}(t,x)(\omega^L_{0,n}(x)+\omega_{0,n}^S(x)),
\end{equation*} 
where
\begin{equation*}
D\Phi_n:=
\begin{pmatrix}
\partial_1\Phi_{n,1}&\partial_2\Phi_{n,1}&\partial_3\Phi_{n,1}\\
\partial_1\Phi_{n,2}&\partial_2\Phi_{n,2}&\partial_3\Phi_{n,2}\\
\partial_1\Phi_{n,3}&\partial_2\Phi_{n,3}&\partial_3\Phi_{n,3}
\end{pmatrix}.
\end{equation*}
Moreover, since the third variable of each components is absent, we can decouple the equations into two parts: 2D-part and third component part (see \cite[Section 1.3]{EM} for example).
More precisely,
we take $\omega^L_{0,n}(x_1,x_2,x_3) = (0, 0, \tilde{\omega}^L_{0,n}(x_1,x_2))$, where $\tilde{\omega}^L_{0,n}$ is from Lemma \ref{lem:LLD}. The initial data $\tilde{\omega}^L_{0,n}$ gives rise to a 2D flow map $\eta_n$. With some abuse of notation, denoting the 3D flow map associated with the solution for $\omega^L_{0,n}$ again by $\eta_n$, we have the representation 
\begin{equation*}
D\eta_n:=
\begin{pmatrix}
\partial_1\eta_{n,1}&\partial_2\eta_{n,1}&0\\
\partial_1\eta_{n,2}&\partial_2\eta_{n,2}&0\\
0&0&1
\end{pmatrix},
\quad
\quad
D\eta_n^{-1}=
\begin{pmatrix}
\partial_2\eta_{n,2}&-\partial_2\eta_{n,1}&0\\
-\partial_1\eta_{n,2}&\partial_1\eta_{n,1}&0\\
0&0&1
\end{pmatrix}.
\end{equation*}
 Note that we always have $\nrm{D\eta_n(t)}_{\infty} = \nrm{D\eta_n^{-1}(t)}_{L^\infty}$. We shall also identify $D\eta_n$ and $D\eta_n^{-1}$ with $2\times 2$ matrices whenever it is convenient to do so.
Eventually we have the following explicit formula: 
\begin{equation}\label{scale-separation-expression}
	\begin{split}
	\omega_n^L(t,\eta_n(t,x)) = \omega_{0,n}^L(x)\quad\text{and}\quad \omega_n^S(t,\eta_n(t,x)) = D\eta_n(t,x)\omega_{0,n}^S(x).
	\end{split}
	\end{equation}
Again, by the Biot-Savart law, we can also recover the large-scale velocity:
\begin{equation*}
\nonumber
 u_{n}^L(t,x)
= 
\int_{\mathbb{T}^3} K_2\big(x - y \big) \omega_{n}^L(t,y) \, dy
\end{equation*}  
and also $u_{n}^S(t,x)
=
 u_{0,n}^S(t,\eta_n(t,x))$ with $\nabla\times u_{0,n}^S=\omega_{0,n}^S$.
\begin{remark}
 On the other hand, in Bourgain-Li's illposed construction, 
the authors are carefully controlling the scale-interaction (see \cite[Sec. 4]{BL}, in particular, Lemma 4.1 combining $L^4$-energy (4.16)). This must be the crucial difference on the scale-interaction between 2D and 3D.
\end{remark}
Since 
 $\omega^S_{n}(t)\perp e_3$ and $\omega^L_{n}(t)\parallel e_3$,  
 we have $$\|\omega(t)\|_{L^2}^2=\|\omega^L_{n}(t)\|_{L^2}^2+\|\omega^S_{n}(t)\|_{L^2}^2.$$
In order to see the energy cascade mechanism, we reasonably define expected-value of spectrum on the small-scale energy $\mathbb E[u^S_n]$ as follows:
\begin{equation*}
\mathbb E[u^S_n(t)]:=\frac{\int_{\mathbb{T}^3} |\xi|^2|\hat u^S_n(\xi)|^2d\xi}{\|u^S_n(t)\|_{L^2}^2}\approx \frac{\|\omega^S_n(t)\|_{L^2}^2}{\|u^S_n(t)\|_{L^2}^2}.
\end{equation*} 
Roughly saying, in this case, the spectrum ($\sim 1/(scale)$) concentrates at $\mathbb E[u^S_n(t)]$, and thus we can use this value as the scale.
We now give the main theorem.
\begin{theorem}[Instantaneous energy cascade]\label{thm:main}\  
Let $\omega^L_{0,n}$ be either \eqref{BC-initial-data} or \eqref{eq:bubbles-Linfty}, which is bounded uniformly in $L^\infty$.
Then there is a sequence of  points and radii $\{x^*_n\}_n, \{r_n\}_n\subset \mathbb{T}^2$ such that the following statement holds:
For  $\{\rho_n\}_n\subset C^\infty_{c}(\mathbb{T}^2)$ 
with $\|\nabla\rho_n\|_{L^2} = 1$,  let $\omega^S_{0,n} \in C^\infty(\mathbb{T}^3)$ be a sequence of smooth small-scale initial vorticity such that
\begin{equation}\label{sequence-of-small-scale} 
u_{0,n}^S=(0,0,   
\rho_n(x_h-x_n^\ast)),\quad 
\omega^S_{0,n}=\nabla\times u_{0,n}^S
\quad\text{and}\quad \text{supp}\, \rho_n\subset B(x^*_n,r_n),
\end{equation} 
where  $x_h=(x_1,x_2)$.
Then there exist $\delta > 0$  and a sequence of positive time moments $t_n\to t_\infty\in[0,\delta]$   such that the corresponding solution satisfies
$\|u^S_n(t)\|_{L^2}=\|u^S_{0,n}\|_{L^2} \lesssim  1$,
\begin{equation*}
\|\omega_{0,n}^S\|_{L^2} \approx 1
\quad\text{and}\quad
\frac{\mathbb E[u^S_n(t_n)]}{\mathbb E[u^S_n(0)]}\to\infty
\quad\text{as}\quad
n\to\infty.
\end{equation*}

\end{theorem}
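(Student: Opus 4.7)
The plan is to exploit the scale decoupling in \eqref{scale-separation-expression}. Under the chosen symmetry, $\omega^L_n$ evolves by 2D Euler alone and generates an area-preserving flow $\eta_n$; the small-scale vorticity is stretched by $D\eta_n$, while $u^S_{n,3}$ is transported by $\eta_n$, so $\|u^S_n(t)\|_{L^2}$ is conserved. Thus
$$
\frac{\mathbb{E}[u^S_n(t_n)]}{\mathbb{E}[u^S_n(0)]} \approx \frac{\|\omega^S_n(t_n)\|_{L^2}^2}{\|\omega^S_{0,n}\|_{L^2}^2},
$$
and it suffices to make the numerator blow up while keeping the denominator $\approx 1$. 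The mechanism is to plant $\omega^S_{0,n}$ in a tiny ball around a point where the 2D Lagrangian deformation $D\eta_n$ is arbitrarily large.

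First, I would apply Lemma \ref{lem:LLD} to \eqref{BC-initial-data} or \eqref{eq:bubbles-Linfty} to obtain times $t_n \to t_\infty \in [0,\delta]$ and points $x^*_n \in \mathbb{T}^2$ with $\sigma_n := \|D\eta_n(t_n, x^*_n)\| \to \infty$ while $\|\omega^L_{0,n}\|_{L^\infty}$ stays bounded. Next, fix a radially symmetric bump $\phi \in C^\infty_c(B(0,1))$ with $\|\nabla\phi\|_{L^2(\mathbb{R}^2)} = 1$ and set $\rho_n(y) := \phi((y - x^*_n)/r_n)$ with $r_n > 0$ to be chosen. Then $\|\nabla\rho_n\|_{L^2} = 1$ and $\|\omega^S_{0,n}\|_{L^2} = 1$, while $\|u^S_{0,n}\|_{L^2} \lesssim \|\rho_n\|_{L^2} \lesssim r_n$. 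For each $n$, smoothness of $\omega^L_{0,n}$ yields continuity of $D\eta_n(t_n,\cdot)$, so $r_n$ can be chosen small enough that $\sup_{x \in B(x^*_n, r_n)} \|D\eta_n(t_n,x) - M_n\| \leq \sigma_n/100$, where $M_n := D\eta_n(t_n, x^*_n)$. Using \eqref{scale-separation-expression} and the change of variables $y = \eta_n(t_n, x)$,
$$
\|\omega^S_n(t_n)\|_{L^2}^2 = \int |D\eta_n(t_n,x)\,\nabla^\perp \rho_n(x)|^2\, dx \geq \tfrac{1}{2}\int |M_n \nabla^\perp \rho_n|^2\, dx.
$$
The radial symmetry of $\rho_n$ gives $\int (\nabla^\perp\rho_n)(\nabla^\perp\rho_n)^T\, dx = \tfrac{1}{2}I$, and since $M_n$ is unimodular ($\det M_n = 1$) with largest singular value $\sigma_n$, one has $\int|M_n\nabla^\perp\rho_n|^2\, dx = \tfrac{1}{2}(\sigma_n^2 + \sigma_n^{-2}) \gtrsim \sigma_n^2$. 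Together with $\|u^S_n(t_n)\|_{L^2} = \|u^S_{0,n}\|_{L^2}$, this yields $\mathbb{E}[u^S_n(t_n)]/\mathbb{E}[u^S_n(0)] \gtrsim \sigma_n^2 \to \infty$.

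The heart of the argument is packed into Lemma \ref{lem:LLD}, which must supply a uniformly $L^\infty$-bounded family of smooth 2D initial vorticities whose flow exhibits unbounded pointwise Lagrangian deformation at uniformly bounded times and at explicitly locatable points. Granting that, the remaining delicate step is the local-constancy of $D\eta_n(t_n,\cdot)$ on $B(x^*_n, r_n)$: because $u^L_n$ is only log-Lipschitz uniformly in $n$, the admissible radius $r_n$ must shrink very fast, and any attempt at a quantitative rate would require bounds on a modulus of continuity that degenerates along the sequence. Fortunately the theorem only asserts existence of some $r_n$ for each fixed $n$, so the (non-uniform) $C^\infty$ regularity of each individual $\omega^L_{0,n}$ suffices, and the diagonal selection of $(t_n, x^*_n, r_n, \rho_n)$ closes the argument.
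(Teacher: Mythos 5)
Your argument is correct and follows the paper's own route: Lemma \ref{lem:LLD} supplies the large Lagrangian deformation on a small ball, the Cauchy formula \eqref{scale-separation-expression} together with area preservation of the 2D flow converts it into $L^2$ growth of $\omega^S_n(t_n)$, and conservation of $\|u^S_n(t)\|_{L^2}$ (the third velocity component being passively transported) turns that into growth of $\mathbb{E}[u^S_n]$. The only difference is in how the stretching is captured by the blob: the paper aligns $\rho_n$ with the expanding direction (taking $\rho_n = C_n x_2$ near the center, assuming WLOG that $\partial_1\eta_{n,1}$ is large), whereas you take a radial bump and use the isotropy identity $\int(\nabla^\perp\rho_n)(\nabla^\perp\rho_n)^T\,dx = \tfrac12 I$ together with $\det D\eta_n = 1$ — a slightly cleaner variant that avoids identifying the stretching direction, at the harmless cost of shrinking $r_n$ so that $D\eta_n(t_n,\cdot)$ is nearly constant on the support.
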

In the proof, one can see that this instantaneous energy cascade is induced by vortex-stretching. 
\begin{remark}
	Note that \begin{equation*}
	\begin{split}
	u^L_n(t,x) \approx a_n(t)\begin{pmatrix}
	x_1 \\
	-x_2
	\end{pmatrix}
	\end{split}
	\end{equation*} near some neighborhood of the origin, with $a_n(t) \rightarrow \infty $ as $n \rightarrow \infty$. Then if $u_n^S$ were initially supported in that neighborhood, we have $u_n^S(t) \approx v_n^S(t)$ where $v_n^S$ is defined by \begin{equation*}
	\begin{split}
	\rd_t v_n^S + a_n(t)\begin{pmatrix}
	x_1 \\
	-x_2
	\end{pmatrix} \cdot\nabla v_n^S = 0 
	\end{split}
	\end{equation*} with $v_{0,n}^S = u_{0,n}^S$, which is explicitly solvable with \begin{equation*}
	\begin{split}
	v^S_n(t,x_1,x_2) = u_{0,n}^S(e^{-\int_0^t a_n(s)ds}x_1,e^{\int_0^t a_n(s)ds}x_2 ).
	\end{split}
	\end{equation*} From this expression, energy cascade is clear. Moreover, we can rephrase the above theorem by using the coarse graining \eqref{Gauss}, namely (here we write down in the whole space $\mathbb{R}^2$ case),
\begin{equation}
\|\overline{(v_n^S)}_K(t_n)\|^2_{L^2}=
\int_{\mathbb{R}^2}|\hat u_{0,n}^S(t_n,e^{\int_0^t a_n(s)ds}\xi_1,e^{-\int_0^{t_n} a_n(s)ds}\xi_2 )
\hat G_K(\xi)|^2d\xi
\to 0
\end{equation}
as $ n \rightarrow \infty$ for any $K$ (c.f. (17) and (18) in \cite{DE}, see also Remark \ref{Theodore-Eyink} in this paper).
\end{remark}

\begin{remark}
At a glance, this mechanism looks similar to \cite[Proposition 1.3]{EM} but totally different.
In their argument, $L^2$-norm of vorticity case is excluded. They just directly apply Lagrangian flow created by Bahouri-Chemin vorticity (see \eqref{eq:BC}).
On the other hand, in our argument, we need to carefully estimate the Lagrangian deformation in some time interval.
Let us further clarify: we can also show that there is a sequence of small-scale initial vorticity with $\|\omega^S_{0,n}\|_{L^2}\to 0$ ($n\to\infty$) such that the corresponding solutions satisfy
$\|\omega^S_n(t_n)\|_{L^2}\to\infty$ ($n\to\infty$). However their argument cannot capture this mechanism.  
\end{remark}

	\begin{remark}
		We present a simple computation which illustrates that, at least in the setup of $2+\frac{1}{2}$-dimensional flow, anomalous dissipation of energy is not caused by the creation of vortex-stretching by $C^{1/3}$-roughness. To this end, recall that  \begin{equation}\label{eq:2andhalfEuler}
		\left\{ \begin{aligned}
		&\rd_t u_h + u_h\cdot\nabla u_h + \nabla p = 0, \\
		&\rd_t u_3 + u_h\cdot\nabla u_3 = 0,
		\end{aligned}\right.
		\end{equation} where $u = (u_h,u_3)$ is a function of $(x_1,x_2)$ only.  We take $\omega_h = \nabla\times u_h$, and consider the initial data \begin{equation*}
		\begin{split}
		\omega_{h,0}= r^{-\frac{2}{3}}\sin(2\theta), 
		\end{split}
		\end{equation*} where $(r,\theta)$ is the usual polar coordinates. Then we compute that the corresponding velocity is \begin{equation*}
		\begin{split}
		u_{h,0} = \nabla^\perp\lap^{-1}(\omega_{h,0}) =c  x_h^\perp \left( r^{-2/3} - \frac{2}{3} r^{-2/3-2} x_1x_2 \right). 
		\end{split}
		\end{equation*} Note that $u_{h,0}$ belongs to exactly $C^{1/3}(\mathbb{R}^2)$ and not better. We may further set $u_{3,0}$ to behave like $|x_h|^{1/3}$ near the origin. However that the energy is conserved (at least at the initial time). Indeed, \begin{equation*}
		\begin{split}
		\frac{1}{2}\left.\frac{d}{dt}\right\rvert_{t = 0}  \int  u_h^2 + u_3^2 \, \ud x_1\ud x_2\, & = - \int u_{h,0}\cdot \left(\nabla u_{h,0} \cdot u_{h,0} + \nabla u_{3,0}\cdot u_{3,0} \right) \, \ud x_1\ud x_2\, \\
		& = \lim_{\epsilon\rightarrow 0} \int_{ \rd B(0,\epsilon)} \frac{1}{2} u_{h,0} \cdot N \left( |u_{h,0}|^2 + |u_{3,0}|^2 \right) 
		\, \ud \sigma \\
		& = \lim_{\epsilon\rightarrow 0}  O(\epsilon^{1+ 3(1-2/3)}) = 0,
		\end{split}
		\end{equation*} where $N$ is the outwards unit normal vector on $\rd B(0,\epsilon)$ and $\sigma$ is the Lebesgue measure on $\rd B(0,\epsilon)$. We refer to recent works of Luo and Shvydkoy \cite{LS1, LS2, Sh} which systematically studies the radially homogeneous solutions to 2D and 3D Euler equations and conclude absence of anomalous dissipation in that class of solutions. \end{remark}

{ 
	We now give the second theorem. It states that the instantaneous energy cascade we have obtained in the above result is strong enough to create dissipation of order $\nu^{1-a_0}$ for some $1 > a_0 > 0$ in a viscosity independent time interval for the 3D Navier Stokes equations in the limit $\nu \rightarrow 0^+$. For this we shall take the Bahouri-Chemin type data \eqref{BC-initial-data} as well as a concrete choice of $\rho_n$ in \eqref{sequence-of-small-scale}; see below. 
}

\begin{theorem}[Modified zeroth-law] \label{thm:main2}
Let $\{\omega^\nu_n(t)\}_{\nu,n}$ be a 
vorticity sequence of the 3D Navier-Stokes equations with
$\omega^\nu_n(0)=\omega_{0,n}^L+\omega_{0,n}^S$,
where { $\omega_{0,n}^L$ is the smoothed Bahouri-Chemin data as in \eqref{BC-initial-data}
and $\omega_{0,n}^S$ is given by $\nabla\times u_{0,n}^S = \nabla\times (0,0,\rho_n(x_h))$, where $\rho_n \ge 0$ is a smooth function satisfying $\rho_n(x_h) = 2^{n/2} x_2$ in $B(0,2^{-n/2})$, $\rho_n = 0$ outside of $B(0,2^{2-n/2})$, and $\nrm{\nabla \rho_n}_{L^\infty} \le 2^{n/2}$.} The sequence of initial velocity is convergent in $L^2(\mathbb{T}^3)$. Then, there exist a small constant $0<a_0 <1$ and a sequence of viscosity constants $\nu_n > 0$, converging to zero, 
such that
\begin{equation}\label{zeroth-law-estimate}
\liminf_{n\to \infty}\, \nu_n^{a_0} \frac{1}{\delta}\int_0^\delta\int_{\mathbb T^3}|\omega^{\nu_n}_n(t,x)|^2dxdt>1,
\end{equation}
where $\delta$ is determined in Proposition \ref{BC}.
\end{theorem}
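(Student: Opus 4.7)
The plan is to transfer the instantaneous enstrophy cascade of Theorem~\ref{thm:main} from the Euler solution to the Navier--Stokes solution via a viscosity-dependent stability argument, and then to pick $\nu_n$ small enough that the factor $\nu_n^{a_0}$ cannot kill the gain. Write $\omega_n^E$ for the Euler evolution of the stated data. The first step is to quantify the Euler enstrophy growth: by the representation \eqref{scale-separation-expression} applied to the 2D Bahouri--Chemin flow $\eta_n$, together with the hyperbolic stretching estimate of Proposition~\ref{BC}, the image of $\supp\omega_{0,n}^S\subset B(0,2^{2-n/2})$ under $\eta_n$ is squeezed into an ellipse whose axes are of order $e^{\pm A_n(t)}2^{-n/2}$, where $A_n(t):=\int_0^t a_n(s)\,ds$ measures the accumulated strain near the hyperbolic point. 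Since $\rd_2\rho_n\aeq 2^{n/2}$ on the support of $\omega_{0,n}^S$, a Jacobian-$1$ change of variables in the explicit formula from the first remark after Theorem~\ref{thm:main} gives
\begin{equation*}
\|\omega_n^S(t)\|_{L^2}^2\ageq e^{2A_n(t)}\|\rd_2\rho_n\|_{L^2}^2\ageq e^{2A_n(t)}
\end{equation*}
on a subinterval of $[0,\delta]$ of uniformly positive length, whence $\int_0^\delta\|\omega_n^E(t)\|_{L^2}^2\,dt\ageq e^{cA_n(\delta)}$ for some $c>0$ with $A_n(\delta)\to\infty$ as $n\to\infty$.

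Next I would compare $\omega_n^\nu$ to $\omega_n^E$ on $[0,\delta]$. The difference $w:=\omega_n^\nu-\omega_n^E$ satisfies a transport--stretching--diffusion equation driven by $u_n^E$ and forced by $\nu\Delta\omega_n^E$, so a standard Grönwall estimate yields
\begin{equation*}
\|w(t)\|_{L^2}\aleq \nu\,e^{C\int_0^t\|\nabla u_n^E(s)\|_{L^\infty}\,ds}\cdot\nrm{\omega_{0,n}}_{H^s}
\end{equation*}
for a suitable $s$. The point is that $\|\nabla u_n^E\|_{L^\infty}$ is essentially controlled by the Bahouri--Chemin background, whose $L^1_t L^\infty_x$-strain on $[0,\delta]$ is at most polynomial in $n$ (say $\le C' n^\alpha$, by Proposition~\ref{BC}), and $\nrm{\omega_{0,n}}_{H^s}$ also grows polynomially in $n$ from the explicit profile of $\rho_n$. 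Choosing $\nu_n = e^{-M n^\alpha}$ with $M$ large then ensures $\|w(\delta)\|_{L^2}\ll \|\omega_n^E(\delta)\|_{L^2}$, so by the triangle inequality $\int_0^\delta\|\omega_n^{\nu_n}(t)\|_{L^2}^2\,dt\ageq e^{cA_n(\delta)}$. Since the ratio $A_n(\delta)/n^\alpha$ is bounded below uniformly in $n$, one can fix $a_0\in(0,1)$ so that $\nu_n^{a_0}e^{cA_n(\delta)}\to\infty$, giving \eqref{zeroth-law-estimate}.

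The main obstacle is that the Grönwall bound in the NS--Euler comparison loses an exponential in exactly the stretching rate that also drives the enstrophy blow-up. For the argument to succeed, the $L^\infty$-strain rate of the Bahouri--Chemin background must grow \emph{strictly slower} than the $L^2$-enstrophy gain from the small-scale piece; supplying this quantitative separation is the role of Proposition~\ref{BC}, and the explicit profile of $\rho_n$ in the hypothesis is tailored to exploit it. A secondary subtlety is making sure that the Sobolev norm $\nrm{\omega_{0,n}}_{H^s}$ entering the comparison constant grows no faster than a fixed power of $n$, which constrains how the smoothed BC data $\omega_{0,n}^L$ is constructed in \eqref{BC-initial-data}; the coupling between this smoothing parameter and $\nu_n$ is what ultimately fixes the admissible range of $a_0$.
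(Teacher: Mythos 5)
Your overall architecture --- Euler enstrophy growth from Proposition \ref{BC}, an inviscid-limit stability estimate for the difference with the Navier--Stokes solution, and a final choice of $\nu_n$ and $a_0$ balancing two exponential rates in $n$ --- is the paper's. The genuine gap is in the stability step. You propose a single Gr\"onwall estimate for $w=\omega_n^\nu-\omega_n^E$ with rate $\int_0^t\nrm{\nabla u_n^E(s)}_{L^\infty}ds$ and assert this is $\aleq n^\alpha$ because the strain is ``essentially controlled by the Bahouri--Chemin background.'' That is false for the full velocity: $\nabla u_n^E$ contains $\nabla u_n^S$, which is of size $2^{n/2}$ already at $t=0$ (since $\nrm{\nabla\rho_n}_{L^\infty}\aeq 2^{n/2}$) and of size $2^{cn}$ for later times; moreover, the difference equation produces terms such as $\nrm{\nabla\omega_n^S}_{L^\infty}\nrm{u_n^\nu-u_n^E}_{L^2}\nrm{w}_{L^2}$ and $\nrm{\omega_n^S}_{L^\infty}\nrm{\nabla u_n^\nu-\nabla u_n^E}_{L^2}\nrm{w}_{L^2}$ whose coefficients are $2^{cn}$, not $n$. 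A one-shot Gr\"onwall therefore yields a factor like $\exp(C\,2^{n/2}\delta)$, which no choice $\nu_n=e^{-Mn^\alpha}$ can absorb while still allowing $\nu_n^{a_0}\,2^{c_0 n}\to\infty$. (Your secondary claim that $\nrm{\omega_{0,n}}_{H^s}$ grows polynomially in $n$ is also wrong --- it grows like $2^{c(s)n}$ --- but that error alone would be absorbed by taking $M$ large.)

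The missing idea is the one-directional coupling of the $2+\frac{1}{2}$-dimensional structure: $u^S$ points in the $e_3$-direction and nothing depends on $x_3$, so the small scale never advects or stretches anything, while the large scale is a genuinely 2D flow with bounded vorticity and hence strain $\nrm{\nabla u_n^L}_{L^\infty}\aleq n$. The paper exploits this through a four-stage comparison: (i) $u_n^{L,\nu}$ vs.\ $u_n^L$ in $L^2$ (2D Navier--Stokes vs.\ 2D Euler, Gr\"onwall rate $n$, output $O(\nu^{1/2}2^{cn})=O(2^{-Mn})$ for $\nu\le 2^{-2Mn}$); (ii) $u_n^{S,\nu}$ vs.\ $u_n^S$ in $L^2$, where the dangerous coefficient $\nrm{\nabla u_n^S}_{L^\infty}\aeq 2^{cn}$ multiplies the already-small output of (i) rather than the unknown itself; (iii) the analogous $L^2$ estimate for $\omega_n^L-\omega_n^{L,\nu}$; (iv) the small-scale vorticity comparison, concluding $\nrm{\omega_n^S-\omega_n^{S,\nu}}_{L^2}\aleq 1$. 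At every stage the self-interaction rate is only $n$, and every exponentially large constant is paired with a factor $2^{-Mn}$ established at an earlier stage; only then does the triangle inequality against the Euler lower bound $2^{c_0 n}$ give \eqref{zeroth-law-estimate} with $a_0=c_0/(2M)$. Without this hierarchical decomposition your stability estimate does not close; with it, your argument becomes the paper's.
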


The open problem is to find a sequence of vorticity which achieves $a_0=1$.
{
\begin{remark}
 The estimate \eqref{zeroth-law-estimate} is dimensionally not correct, in the sense that
 left and right hand sides
do not have the same physical dimensions.
This is because  we are using the  initial data which have different scalings in large-scale  and small-scale. 
\end{remark}
}

\begin{remark}
	Note that in the above statement, the sequence of initial velocity is uniformly bounded in $H^1$ and hence convergent in any norms between $L^2$ and $H^1$. It is not difficult to guarantee that, by rescaling $\omega_{0,n}^S$ appropriately in $n$, that the same statement holds (possibly with a smaller $a_0 > 0$) with initial velocity uniformly bounded and convergent in $W^{1,p}$ with some $p > 2$. 
\end{remark}
{
\begin{remark}\label{Theodore-Eyink}
We compare \eqref{zeroth-law-estimate} with a recent result on an Onsager singularity theorem for Leray solutions of incompressible Navier-Stokes equations \cite{DE}.
The authors of \cite{DE} showed that if a sequence of Leray solutions $\{u^\nu\}_\nu$ are uniformly bounded in
$L^3([0,\delta];B^\sigma_{3,\infty}(\mathbb{T}^3))$ for some $\sigma\in (0,1)$,
then the corresponding solutions satisfy
\begin{equation}\label{dissipation estimate}
\int_0^\delta\int_{\mathbb{T}^3}\epsilon[u^\nu]dxdt\lesssim \nu^{\frac{3\sigma-1}{\sigma+1}},
\end{equation}
where $\epsilon[\cdot]$ is expressing energy dissipation, that is (in the absence of the external force),
\begin{equation*}
\int_0^\delta\int_{\mathbb{T}^3}\epsilon[u^\nu]dxdt:=
\frac{1}{2}\int_{\mathbb{T}^3}|u^\nu(0,\cdot)|^2dx-\frac{1}{2}\int_{\mathbb{T}^3}|u^\nu(\delta,\cdot)|^2dx.
\end{equation*}
(Note that the function space $L^3([0,\delta];B^\sigma_{3,\infty}(\mathbb{T}^3))$ is physically natural; see Remark 1 in \cite{DE}.) The estimate \eqref{dissipation estimate} gives an upper bound on the value of the constant $a_0$ from \eqref{zeroth-law-estimate}: for $\sigma> (2-a_0)/(2+a_0)$, the sequence of solutions $\{u_n^{\nu_n}\}_n$ in Theorem \ref{thm:main2} (the corresponding vorticities are $\{\omega_n^{\nu_n}\}_n$) does not belong to $L^3([0,\delta];B^\sigma_{3,\infty}(\mathbb{T}^3))$  uniformly in $n$.
The proof is the following: 
assume to the contrary that the sequence of solutions $\{u_n^{\nu_n}\}_n$ in Theorem \ref{thm:main2} belongs to $L^3([0,\delta]; B^\sigma_{3,\infty}(\mathbb{T}^3))$ uniformly in $n$. 
 By \eqref{zeroth-law-estimate}, we see
\begin{equation*}
\int_0^\delta\int_{\mathbb{T}^3} \epsilon[u^{\nu_n}_n] dxdt = \nu_n\int_0^\delta\int_{\mathbb{T}^3}|\nabla u^{\nu_n}_n(t,x)|^2dxdt\gtrsim\nu_n^{1-a_0}.
\end{equation*}
Thus, if $\sigma$ satisfies  $1-a_0>\frac{3\sigma-1}{\sigma+1}$, that is,
 $\sigma> (2-a_0)/(2+a_0)$, then it contradicts \eqref{dissipation estimate} for sufficiently large $n$.
On the other hand,  the sequence of solutions $\{u^{\nu_n}_n\}_n$  belongs uniformly in $L^3_t B^\sigma_{3,\infty}$ with some 
$\sigma$. To see this, one can directly estimate the equation \begin{equation*} 
\begin{split}
&\rd_t \omega_n^{S,\nu} + u_n^{L,\nu} \cdot \nabla \omega_n^{S,\nu} = \nabla u_n^{L,\nu} \omega_n^{S,\nu} + \nu \lap \omega_n^{S,\nu}
\end{split}
\end{equation*} in $L^p$: $\nrm{\omega_n^{S,\nu}(t)}_{L^p} \lesssim \nrm{\omega_{0,n}^S}_{L^p} \exp(\int_0^t \nrm{\nabla u_n^{L,\nu}(s)}_{L^\infty} ds)$, with an implicit constant independent of $\nu \ge 0$. From our choice of initial data and $\nrm{\nabla u_n^{L,\nu}}_{L^\infty} \lesssim n$, it follows that the corresponding solution $\omega_n^{S,\nu}$ belongs  to $L^\infty([0,t];L^{p(t)}(\mathbb{T}^2))$ with $p(t) = 2-ct$ for some constant $c>0$. 
Then at least for $t > 0$ sufficiently small, the velocity is uniformly in $L^\infty_t W^{1,p(t)} \subset L^3_t B^\sigma_{3,\infty}$ with $2-3/p(t) = \sigma$.  This gives a restriction that $a_0 \le 2/3$.
\end{remark}
}

\begin{remark}
Let us also explain briefly a recent result on the mathematical turbulence.
Buckmaster-De Lellis-Sz\'ekelyhidi-Vicol \cite{BLSV} showed the following:
For any positive smooth function $e: [0,T]\to \mathbb{R}$, and for any
$0<\beta<1/3$, there is a weak Euler solution $v\in C^\beta([0,T]\times\mathbb{T}^3)$
satisfying
\begin{equation}
\int_{\mathbb{T}^3}|v(t,x)|^2dx=e(t).
\end{equation}
The starting point of their proof is to employ
a sequence $\{\bar u_{K_j}\}_j$ of the Euler-Reynolds system \eqref{averaged-NS}.
To construct a weak solution, we do not need to consider any coarse-graining, this means,
the stress tensor $\tau_K$ does not need to satisfy the formula $\overline{(u\otimes u)}_K-\bar u_K\otimes\bar u_K$.
Roughly saying,
 $\lim_{j\to\infty}\bar u_{K_j}$ ($K_0\ll K_1\ll K_2\ll\cdots$)
is their constructed weak solution, and
in this case, we need to show $\lim_{j\to\infty}\tau_{K_j}=0$
in a weak sense.
The key idea is to control $\tau_{K_j}$ by 
 $(\bar u_{K_{j+1}}-\bar u_{K_{j}})\otimes(\bar u_{K_{j+1}}-\bar u_{K_{j}})$, 
and in order to do so,  they employ a stationary Euler flow called Mikado flow.
In this procedure, the initial stress tensor $\tau_{K_0}$ becomes unknown, but the idea is  to replace it to the given function $e(t)$. 
Therefore we would emphasize that it must be interesting to find 
our vortex-stretching mechanism 
in each scale $1/K_j$ (with some appropriate coarse-graining) and this is our future work.
\end{remark}

The above theorems are  consequences of the following lemma for the 2D Euler equations, which was first established in a work of Bourgain and Li \cite{BL}. 
In this paper we sophisticate their idea, and systematically summarize the large Lagrangian deformation in the next section (the following lemma is 
a direct consequence of either Propositions \ref{BC} or \ref{prop:LLD-bubbles}).

\begin{lemma}[Instantaneous large Lagrangian deformation]\label{lem:LLD} 
	There exist a sequence of smooth initial data $\tilde{\omega}_{0,n}^L \in C^\infty_c(\mathbb{T}^2)$ normalized in $L^2(\mathbb{T}^2)$, $\delta > 0$, and sequences $M_n \rightarrow + \infty$, $t_n^* \rightarrow t^*_\infty\in [0,\delta]$, $r_n > 0$, and $x_n^* \in \mathbb{T}^2$ such that the 2D flow map $\eta_n$ associated with the solution for $\tilde{\omega}_{0,n}^L$ exhibits large Lagrangian deformation: \begin{equation}\label{eq:LLD-general}
	\begin{split}
	\inf_{ x \in B_{x_n^*}(r_n) } \left|D\eta_n(t_n^*, x) \right| \ge M_n
	\end{split}
	\end{equation} for all $n \ge 1$. 
\end{lemma}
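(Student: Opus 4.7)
The plan is to employ a smoothed Bahouri--Chemin vorticity on $\mathbb T^2$ and exploit the hyperbolic fixed point at the origin that is forced by odd--odd symmetry. Concretely, take $\tilde\omega_{0,n}^L \in C_c^\infty(\mathbb T^2)$ odd in both $x_1$ and $x_2$, normalized in $L^2$, with $\tilde\omega_{0,n}^L \equiv c$ on the dyadic square $\{2^{-n}\le x_1,x_2 \le 1/2\}$ (extended by the symmetry) and smoothly cut off near the coordinate axes at scale $2^{-n}$. The odd--odd symmetry is preserved by the 2D Euler flow, so $u_n^L(t,0)=0$ for all $t$, and hence the origin is a Lagrangian fixed point; moreover, $u_{n,1}^L$ is odd in $x_1$, even in $x_2$, and vice versa for $u_{n,2}^L$. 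A uniform $L^\infty$ bound on $\tilde\omega_n^L(t)$ (Yudovich theory) gives a $\delta>0$, independent of $n$, on which all estimates below hold.

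Next, I would extract a logarithmically large stretching exponent at the origin. From the Biot--Savart kernel and the sign structure of the odd--odd vorticity, a direct computation shows
\begin{equation*}
a_n(t):=\rd_1 u_{n,1}^L(t,0) = \int_{\mathbb T^2} \frac{y_1 y_2}{|y|^4}\,\tilde\omega_n^L(t,y)\,dy + (\text{periodic corr.}) \gtrsim \log(2^n) \asymp n,
\end{equation*}
uniformly on $[0,\delta]$, since the integrand is pointwise nonnegative on the support and the dyadic annuli $\{2^{-k}\le|y|\le 2^{-k+1}\}$ each contribute a comparable amount for $1 \le k \le n$. Writing $A_n(t):=\int_0^t a_n(s)\,ds$, one has $A_n(t)\gtrsim n t$. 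Taylor-expanding around the origin, the velocity is $u_n^L(t,x) = a_n(t)(x_1,-x_2) + R_n(t,x)$ with a remainder controllable by $\|\nabla^2 u_n^L(t)\|_{L^\infty(B(0,r))}\cdot |x|^2$ on small balls.

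In the third step I would run a Gr\"onwall-type argument on the deformation matrix $D\eta_n$. Away from the remainder, the linearized flow gives eigenvalues $e^{\pm A_n(t)}$, hence $|D\eta_n(t,x)|\ge \tfrac12 e^{A_n(t)}$ as long as the trajectory $\eta_n(s,x)$ stays inside a region where the hyperbolic approximation dominates. I would choose the base point $x_n^*$ on the diagonal axis of contraction at distance $r_n \ll e^{-A_n(t_n^*)}$ from the origin, and the radius $r_n$ even smaller, so that $\eta_n(s,B(x_n^*,r_n))$ remains inside $B(0,2r_n\cdot e^{A_n(s)})$ for $s\in[0,t_n^*]$, and in particular inside the regime where $\|R_n\| \ll a_n|x|$. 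Setting $M_n:= e^{A_n(t_n^*)/2}$ then yields $\inf_{x\in B(x_n^*,r_n)} |D\eta_n(t_n^*,x)| \ge M_n \to \infty$. By tuning $t_n^*$, one may take $t_n^*\to 0$ (e.g.\ $t_n^*\sim (\log n)/n$) for an instantaneous result, or $t_n^*=t^*_\infty\in(0,\delta]$ fixed.

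The main obstacle is justifying the linearization \emph{uniformly on a ball of positive radius}, since trajectories with $x_{0,1}\ne 0$ escape the hyperbolic neighborhood as soon as their $x_1$-coordinate reaches size $O(1)$; both the remainder term $R_n$ and the feedback of the small-scale perturbation on $u_n^L$ must be kept small over the whole window $[0,t_n^*]$. This is essentially the content of Propositions \ref{BC} / \ref{prop:LLD-bubbles}, which sharpen the Bourgain--Li argument by balancing the three parameters $(r_n,t_n^*,2^{-n})$. The alternate "bubble" configuration of Proposition \ref{prop:LLD-bubbles} is useful when one needs the hyperbolic point to be placed away from the origin or to prescribe a specific direction of the stretching axis; otherwise the smoothed Bahouri--Chemin construction suffices.
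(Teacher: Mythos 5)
Your proposal is correct and follows essentially the same route as the paper's Proposition \ref{BC}: smoothed Bahouri--Chemin data with odd--odd symmetry, the lower bound $\rd_1 u_1(t,0)\gtrsim n$ from the Biot--Savart integral $\int y_1y_2|y|^{-4}\omega\,dy$ (persistent on $[0,\delta]$ because the Yudovich flow estimate keeps the vorticity equal to $1$ on a large interior square), and a Gr\"onwall/continuity argument for $D\eta_n$ on a ball of radius $\sim 2^{-(1+ct)n}$ about the origin. The only cosmetic differences are that the paper controls the deviation of $\nabla u_n^L$ from its value at the origin via the interpolated bound $\nrm{\nabla u}_{C^{1/2}}\lesssim\nrm{\omega}_{C^1}^{1/2}$ rather than a quadratic Taylor remainder, and confines the trajectories with the Yudovich estimate $|\eta(t,x)|\le|x|^{1-ct}$ rather than by tracking the expansion rate of the linearized flow.
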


\begin{proof}[Proof of Theorem \ref{thm:main} from Lemma \ref{lem:LLD}]

Using the above lemma,  we take $ t^*_n$, $x_n^\ast = (x^\ast_1, x^\ast_2)$ for which \begin{equation*}
\begin{split}
	\inf_{ x \in B_{x_n^*}(\delta_n) } \left|D\eta_n(t_n^*, x) \right| \ge M_n
\end{split}
\end{equation*} (here we assume, without loss of generality, $\partial_{1} \eta_{n,1}$ is large). Let us recall the small-scale vortex blob $\omega_{0,n}^S$, for each $n$.
Let $\rho_n\in C^\infty_{c}(\mathbb{T}^2)$ be 
such that $\|\nabla\rho_n\|_{L^2} = 1$ and $\text{supp}\, \rho_n\subset B(x^*_n,r_n)$. It is not difficult to ensure that $\nrm{\rho_n}_{L^2} \le 1$. 
We now choose the small-scale initial vorticity $\omega_{0,n}^S$ 
such that for $x_h=(x_1,x_2)$ 
\begin{equation*} 
u_{0,n}^S=(0,0,   
\rho_n(x_h-x_n^\ast))\quad\text{and}\quad 
\omega^S_{0,n}=\nabla\times u_{0,n}^S.
\end{equation*} 
A direct calculation yields (by arranging for instance that $\rho_n(x_h) = C_n x_2$ with $C_n$ depending only on $M_n$ and $\delta_n$ near $x_h = 0$)
\begin{equation}\label{norm-inflation estimate}
\|\omega_n^S(t^*_n)\|_{L^2}=
\|D\eta_n(t^*_n)\omega^S_{0,n}\|_{L^2} \gtrsim M_n\|\omega_{0,n}^S\|_{L^2} \approx M_n,
\end{equation} 
 and then this is the desired estimate.
\end{proof}
\begin{proof}[Proof of Theorem \ref{thm:main2}]
We note that for each $\nu > 0$, the solution $\{\omega^\nu_n\}$ 
exists globally-in-time, simply because the initial data is independent of $x_3$. 
For the sake of  convenience, we divide the proof into several subsections.

\subsection{Estimates on the solution}

We consider  initial data \begin{equation*}
\begin{split}
\omega_{0,n} = \omega_{0,n}^L + \omega_{0,n}^S
\end{split}
\end{equation*} where $\omega_{0,n}^L$ is smoothed Bahouri-Chemin at scale $2^{-n}$ and $\omega_{0,n}^S$ is some bump function supported at scale $2^{-n}$. The normalization is that $\nrm{ \omega_{0,n}^L}_{L^\infty} = 1$ and $\nrm{\omega_{0,n}^S}_{L^2} = 1$ for all $n$ (thus $\nrm{\omega_{0,n}^S}_{L^\infty}\sim 2^{n}$). 
We denote $\omega_n^L$ by the solution of 2D Euler with initial data $\omega_{0,n}^L$. Similarly, $\omega_n^{L,\nu}$ is the solution of 2D Navier-Stokes with viscosity $\nu > 0$ and with the same initial data. The corresponding velocity is denoted by $u_n^L$ and $u_n^{L,\nu}$, respectively.
On the other hand, we consider $u^S_n$, which is the solution of the transport equation 
\begin{equation*}
\begin{split}
\rd_t u^S_{n} + u^L_n \cdot \nabla u^S_n = 0 
\end{split}
\end{equation*} with initial data $u^S_{0,n} = -\nabla \times \lap^{-1}\omega^S_{0,n}$. Similarly, $u^{S,\nu}_n$ is defined by the solution of \begin{equation*}
\begin{split}
\rd_t u^{S,\nu}_{n} + u^{L,\nu}_n \cdot \nabla u^{S,\nu}_n = \nu\lap u^{S,\nu}_n 
\end{split}
\end{equation*} with the same initial data. 
Then, we have that \begin{equation*}
\begin{split}
\omega_n = \omega^L_n + \omega^S_n 
\end{split}
\end{equation*} is the solution of 3D Euler with initial data $\omega_{0,n}$ where $\omega^S_n = \nabla\times u_n^S$. Similarly, \begin{equation*}
\begin{split}
\omega_n^\nu = \omega^{L,\nu}_n + \omega^{S,\nu}_n
\end{split}
\end{equation*} is the solution of 3D Navier-Stokes with initial data $\omega_{0,n}$ with viscosity $\nu > 0$. Here $\omega^{S,\nu}_n = \nabla\times u^{S,\nu}_n$. 
We first estimate the Euler solution. From the maximum principle $\nrm{\omega^L_{n}(t)}_{L^\infty} = 1$ for all time and hence (see \eqref{log-exponential-estimate})
\begin{equation*}
\begin{split}
\nrm{\nabla u^L_n(t)}_{L^\infty} \lesssim \log \nrm{\omega_{0,n}^L}_{C^1} e^{Ct} \lesssim n 
\end{split}
\end{equation*} on the time interval $[0,1]$. Then we have, from the classical estimate
(see \cite{BKM} for example) 
\begin{equation*}
\begin{split}
\nrm{\omega_n^L(t)}_{H^s} \lesssim \nrm{\omega_{0,n}^L(t)}_{H^s} e^{C(s)\int_0^t \nrm{\nabla u^L_n(\tau)}_{L^\infty} d\tau} 
\end{split}
\end{equation*} that \begin{equation*}
\begin{split}
\nrm{\omega_n^L(t)}_{H^s} \lesssim 2^{c(s)n}  
\end{split}
\end{equation*} for some constant $c(s) > 0$ depending only on $s > 1$ for any $s$ and $t \in [0,1]$. 
We note that the Navier-Stokes solution satisfy the same bounds: \begin{equation*}
\begin{split}
\nrm{\omega_n^{L,\nu}(t)}_{H^s} \lesssim 2^{c(s)n}  
\end{split}
\end{equation*} with constant independent of $\nu > 0$. This is because we still have the maximum principle $\nrm{\omega^{L,\nu}_{n}(t)}_{L^\infty} \le 1$ for all $t \ge 0$ and the $H^s$ estimate holds \textit{a fortiori} for the Navier-Stokes. Taking $s>3$ and by the Sobolev embedding, we obtain
\begin{equation}\label{Sobolev-embedding}
\nrm{\nabla^2u_n^L(t)}_{L^\infty}+\nrm{\nabla^2u_n^{L,\nu}(t)}_{L^\infty} \lesssim 2^{cn}
\quad\text{for}\quad t\in[0,1]. 
\end{equation} 
This is elementary but is the key in the estimates below.
We now control $\omega_n^S$ and $\omega_n^{S,\nu}$. To begin with, we note that the corresponding velocities are stable in $L^\infty$ for all time:
 \begin{equation*}
\begin{split}
\nrm{u_n^S}_{L^\infty}, \nrm{u_n^{S,\nu}}_{L^\infty} \lesssim 1, 
\end{split}
\end{equation*} which is clear from the maximum principle. Next, from \begin{equation*}
\begin{split}
\rd_t \omega_n^S + (u_n^L\cdot\nabla)\omega_n^S = \nabla u_n^L \omega_n^S,
\end{split}
\end{equation*} we see that \begin{equation*}
\begin{split}
\nrm{\omega_n^S(t)}_{L^\infty} \le \nrm{\omega_{0,n}^S}_{L^\infty} e^{\int_0^t\nrm{ \nabla u_n^L(\tau)}_{L^\infty}d\tau} \lesssim 2^{cn}. 
\end{split}
\end{equation*} Similarly, from \begin{equation*}
\begin{split}
\rd_t \omega_n^{S,\nu} + (u_n^{L,\nu}\cdot\nabla)\omega_n^{S,\nu} = \nabla u_n^{L,\nu} \omega_n^{S,\nu} + \nu \lap \omega_n^{S,\nu},
\end{split}
\end{equation*} one can obtain $L^\infty$ estimates \begin{equation*}
\begin{split}
\nrm{\omega_n^{S,\nu}(t)}_{L^\infty} \lesssim 2^{cn}. 
\end{split}
\end{equation*}
We shall need just one more estimate: from \begin{equation*}
\begin{split}
\rd_t \nabla \omega_n^S + (u_n^L\cdot\nabla)\nabla\omega_n^S = 2\nabla u_n^L \nabla\omega_n^S + \nabla^2 u_n^L \omega_n^S, 
\end{split}
\end{equation*} we obtain \begin{equation*}
\begin{split}
\frac{d}{dt} \nrm{\nabla \omega_n^S }_{L^\infty} \lesssim n \nrm{\nabla \omega_n^S }_{L^\infty} + 2^{cn}  
\end{split}
\end{equation*} where we have used \eqref{Sobolev-embedding}
and 
$\nrm{\omega_n^S(t)}_{L^\infty} \lesssim 2^{cn}$. 
Together with 
$\nrm{\nabla \omega_{0,n}^S}_{L^\infty} \lesssim 2^{cn}$,
we conclude that \begin{equation*}
\begin{split}
\nrm{\nabla \omega_{n}^S(t)}_{L^\infty} \lesssim  2^{cn}
\end{split}
\end{equation*} on $t \in [0,1]$. 

\subsection{$L^2$ inviscid limit estimate on the large-scale velocity}

We compare the 2D Euler and Navier-Stokes equations  of the velocity:
  \begin{equation*}
\begin{split}
&\rd_t u_n^{L,\nu} + u_n^{L,\nu} \cdot \nabla u_n^{L,\nu} + \nabla p_n^{L,\nu} = \nu \lap u_n^{L,\nu} , \\
&\rd_t u_n^L + u_n^L\cdot\nabla u_n^L + \nabla p_n = 0. 
\end{split}
\end{equation*} 
Then, we see that 
 \begin{equation*}
\begin{split}
\frac{1}{2}\frac{d}{dt} \nrm{u_n^{L,\nu} - u_n^L}_{L^2}^2 + \int( u_n^L - u_n^{L,\nu}) \cdot \nabla u_n^L \cdot (u_n^{L,\nu} - u_n^L)  
= \nu \int \lap u_n^{L,\nu} \cdot (u_n^{L,\nu} - u_n^L) .
\end{split}
\end{equation*} We handle the right hand side as follows: \begin{equation*}
\begin{split}
-\nu\int |\nabla u_n^{L,\nu}|^2 + \nu \int \nabla u_n^{L,\nu} : \nabla u_n^L .
\end{split}
\end{equation*} Then, using the  Cauchy-Schwarz inequality and Young's inequality, we have \begin{equation*}
\begin{split}
\frac{d}{dt} \nrm{u_n^{L,\nu} - u_n^L}_{L^2}^2  \lesssim \nrm{\nabla u_n^L}_{L^\infty} \nrm{u_n^{L,\nu} - u_n^L}_{L^2}^2 + \nu \nrm{\nabla u_n^L}_{L^2}^2 \lesssim n\nrm{u_n^{L,\nu} - u_n^L}_{L^2}^2 + \nu ,
\end{split}
\end{equation*} which gives \begin{equation*}
\begin{split}
\nrm{u_n^{L,\nu} - u_n^L}_{L^2}^2 \lesssim \nu 2^{cn}
\end{split}
\end{equation*} for $t \in [0,1]$.

\subsection{$L^2$ inviscid limit estimate on the small-scale velocity}

We now compare the equations satisfied by $u_n^S$ and $u_n^{S,\nu}$: \begin{equation*}
\begin{split}
\rd_t u^S_{n} + u^L_n \cdot \nabla u^S_n = 0 
\end{split}
\end{equation*} with \begin{equation*}
\begin{split}
\rd_t u^{S,\nu}_{n} + u^{L,\nu}_n \cdot \nabla u^{S,\nu}_n = \nu\lap u^{S,\nu}_n . 
\end{split}
\end{equation*} Proceeding similarly as in the above, we obtain \begin{equation*}
\begin{split}
\frac{d}{dt} \nrm{u_n^{S,\nu} - u_n^S}_{L^2}^2  \lesssim \nrm{\nabla u_n^S}_{L^\infty} \nrm{u_n^{L,\nu} - u_n^L}_{L^2} \nrm{u_n^{S,\nu} - u_n^S}_{L^2} + \nu \nrm{\nabla  u_n^S}_{L^2}^2
\end{split}
\end{equation*} and inserting the previous estimates for $\nrm{u_n^{L,\nu} - u_n^L}_{L^2}$ and $\nrm{\nabla  u_n^S}_{L^2}$, \begin{equation*}
\begin{split}
\frac{d}{dt} \nrm{u_n^{S,\nu} - u_n^S}_{L^2}^2  \lesssim \nu( 2^{cn}\nrm{u_n^{S,\nu} - u_n^S}_{L^2} + 2^{cn}  ) \lesssim 2^{cn}\nu (1 + \nrm{u_n^{S,\nu} - u_n^S}_{L^2}).
\end{split}
\end{equation*} 
Here we figure out the relation between $\nu$ and $n$.
Taking $0 < \nu \le \nu_n := 2^{-2Mn}$ for some $M \gg 1$ (depending only on a few absolute constants), we can guarantee that \begin{equation*}
\begin{split}
\nrm{u_n^{S,\nu} - u_n^S}_{L^2} \lesssim 2^{-Mn}
\end{split}
\end{equation*} on $0 \le t \le 1$ for all $0 < \nu \le \nu_n$.

\subsection{$H^1$ inviscid limit estimate on the large-scale}

This time, we consider the 2D Navier-Stokes solutions in the vorticity form and compare it with the corresponding Euler solutions:
\begin{equation*}
\begin{split}
&\rd_t\omega_n^{L,\nu} + u_n^{L,\nu}\cdot\nabla \omega_n^{L,\nu} = \nu\lap \omega_n^{L,\nu},  \\
&\rd_t\omega_n^L + u_n^L\cdot\nabla \omega_n^L = 0 . 
\end{split}
\end{equation*} Then using previous bounds, \begin{equation*}
\begin{split}
\frac{d}{dt} \nrm{\omega_n^L - \omega_n^{L,\nu}}_{L^2}^2 &\lesssim \nrm{\nabla \omega_n^L}_{L^\infty} \nrm{u_n^L - u_n^{L,\nu}}_{L^2} \nrm{\omega_n^L - \omega_n^{L,\nu}}_{L^2}   + \nu \nrm{\nabla\omega_n^L}_{L^2}^2 \\
&\lesssim 2^{cn} 2^{-Mn} \nrm{\omega_n^L - \omega_n^{L,\nu}}_{L^2} + 2^{cn} 2^{-Mn} .
\end{split}
\end{equation*} This guarantees that \begin{equation*}
\begin{split}
\nrm{\nabla u_n^L -\nabla u_n^{L,\nu}}_{L^2} \sim \nrm{\omega_n^L - \omega_n^{L,\nu}}_{L^2} \lesssim 2^{-Mn}.
\end{split}
\end{equation*}

\subsection{$H^1$ inviscid limit estimate on the small-scale}

The goal is to show that \begin{equation*}
\begin{split}
\nrm{\omega_n^S - \omega_n^{S,\nu}}_{L^2}^2 \lesssim 1
\end{split}
\end{equation*} for all $0 < \nu \le \nu_n:= 2^{-2Mn}$ and $t \in [0,1]$ uniformly in $n$. 
Recall that
\begin{equation*}
\begin{split}
&\rd_t  \omega_n^{S,\nu} + (u_n^{L,\nu}\cdot\nabla)\omega_n^{S,\nu} = \nabla u_n^{L,\nu} \omega_n^{S,\nu}+\nu\Delta \omega_n^{S,\nu}, \\
&\rd_t  \omega_n^S + (u_n^L\cdot\nabla)\omega_n^S = \nabla u_n^L \omega_n^S. 
\end{split}
\end{equation*} 
We have \begin{equation*}
\begin{split}
&\frac{1}{2}\frac{d}{dt} \nrm{ \omega_n^S - \omega_n^{S,\nu}}_{L^2}^2 + \int (u_n^L - u_n^{L,\nu})\cdot\nabla\omega_n^S \cdot( \omega_n^S - \omega_n^{S,\nu}) \\
&\qquad = \int \nabla u_n^L  (\omega_n^S - \omega_n^{S,\nu} )\cdot ( \omega_n^S - \omega_n^{S,\nu}) + \int (\nabla u_n^L - \nabla u_n^{L,\nu}) \omega_n^{S,\nu}\cdot ( \omega_n^S - \omega_n^{S,\nu})   + \nu \int \lap \omega_n^{S,\nu}(\omega_n^{S,\nu} - \omega_n^S). 
\end{split}
\end{equation*} After some routine massaging, \begin{equation*}
\begin{split}
\frac{d}{dt} \nrm{ \omega_n^S - \omega_n^{S,\nu}}_{L^2}^2  & \lesssim \nrm{\nabla\omega_n^S}_{L^\infty} \nrm{ u_n^S - u_n^{S,\nu}}_{L^2}\nrm{ \omega_n^S - \omega_n^{S,\nu}}_{L^2}  + \nrm{\nabla u_n^L}_{L^\infty} \nrm{ \omega_n^S - \omega_n^{S,\nu}}_{L^2}^2 \\
&\qquad + \nrm{\omega_n^{S,\nu}}_{L^\infty} \nrm{\nabla u_n^L-\nabla u_n^{L,\nu}}_{L^2}\nrm{ \omega_n^S - \omega_n^{S,\nu}}_{L^2}  + \nu \nrm{\nabla \omega_n^S}_{L^2}^2 \\
& =: I + II + III + IV .
\end{split}
\end{equation*} We then just apply all the estimates that we have proven so far: \begin{equation*}
\begin{split}
|I| \lesssim 2^{cn} 2^{-Mn} \nrm{ \omega_n^S - \omega_n^{S,\nu}}_{L^2},
\end{split}
\end{equation*} \begin{equation*}
\begin{split}
|II| \lesssim n \nrm{ \omega_n^S - \omega_n^{S,\nu}}_{L^2}^2, 
\end{split}
\end{equation*} \begin{equation*}
\begin{split}
|III| \lesssim 2^{cn} 2^{-Mn} \nrm{ \omega_n^S - \omega_n^{S,\nu}}_{L^2},
\end{split}
\end{equation*} and \begin{equation*}
\begin{split}
|IV| \lesssim \nu 2^{cn} \lesssim 2^{-Mn},
\end{split}
\end{equation*} so that \begin{equation*}
\begin{split}
\frac{d}{dt} \nrm{ \omega_n^S - \omega_n^{S,\nu}}_{L^2}^2 \lesssim n\nrm{ \omega_n^S - \omega_n^{S,\nu}}_{L^2}^2 + 2^{-Mn}. 
\end{split}
\end{equation*} 
From the fact that $\nrm{ \omega_n^S - \omega_n^{S,\nu}}_{L^2}$ is initially 0, we conclude that $\nrm{ \omega_n^S - \omega_n^{S,\nu}}_{L^2} \lesssim 1$, by taking $M$ larger (still depending only on a few absolute constants) if necessary.

\subsection{Completion of the proof}
By Theorem \ref{thm:main} with Proposition \ref{BC}  and \eqref{norm-inflation estimate}, we see
\begin{equation*}
\begin{split}
\nrm{\omega_n^S}_{L^2} \gtrsim 2^{c_0n}
\end{split}
\end{equation*} for $t \in [\delta/2,\delta]$, with $0<\delta \le 1$.
Thus
\begin{equation*}
\begin{split}
\int_0^\delta\|\omega_n(t)\|_{L^2}^2dt  \gtrsim 2^{c_0n}.
\end{split}
\end{equation*} 
Then we finally have 
\begin{equation*}
\begin{split}
(\nu_n)^{a_0}\int_0^\delta\|\omega_n^\nu(t)\|_{L^2}^2dt &\gtrsim 
(\nu_n)^{a_0}\int_0^\delta\|\omega_n(t)\|_{L^2}^2dt
-(\nu_n)^{a_0} \delta \sup_{0<t<\delta}\|\omega_n^\nu(t)-\omega_n(t)\|_{L^2}^2\\
&
\gtrsim
2^{(c_0-2a_0M)n}-c 2^{-2a_0Mn}.
\end{split}
\end{equation*}
Now by taking $a_0=c_0/(2M)$, we obtain  the desired estimate.
\end{proof}

\section{Creation of Lagrangian deformation near the origin}\label{LD near the origin}

Here we consider the cases where occurrence of Lagrangian deformation on some ball centered at the origin can be established. In each setup we estimate the ball size as well. 

In the first scenario, we consider deformation by a single bubble (modulo odd-odd symmetry) which has a single length scale $\sim \ell$. We show deformation happens at a ball of size $\sim \ell $ near the origin, and we work out the multiplicative constant. 

Next, in the second case, we simply smooth out the Bahouri-Chemin data \begin{equation}\label{eq:BC}
\begin{split}
\omega_0 = \mathrm{sgn}(x_1)\mathrm{sgn}(x_2)
\end{split}
\end{equation} (introduced in \cite{BC}) at some length scale. In this situation, since almost all of the domain is occupied by vorticity, it is rather easy to achieve a (sharp) lower bound on the Lagrangian deformation near the origin, for time $O(1)$. 

Lastly, we consider the case of general ``bubbles'' (see \eqref{eq:bubbles}) and show that, as long as the sequence of coefficients is not summable, then short-time large Lagrangian deformation can be achieved near the origin. Here, a difficulty is that the bubbles tend to approach one of the axes quite rapidly (the rate of attraction increases as we put more bubbles near the origin), which slows down the stretching of the gradient of the flow map (in \cite{EJ}, a contradiction argument was used to overcome this difficulty). The existence of such Lagrangian deformation was first established in a celebrated work of Bourgain-Li (\cite{BL}), but our result is more quantitative, applicable for a more general class of initial data, and most importantly does not rely on a contradiction argument so that we know precisely where the large Lagrangian deformation is happening. 

\subsection{Lagrangian deformation from a single bubble}\label{single bubble}
 
We consider a bubble of length scale $\ell$, with odd-odd symmetry. To be precise, take a radial function $h$ on $\mathbb{R}^2$, defined by \begin{equation*}
\begin{split}
h(x) = \begin{cases}
1 & |x| \le 1 \\
2-|x| &  1 \le |x| \le 2 \\
0 & \mbox{otherwise}.
\end{cases}
\end{split}
\end{equation*} We may slightly mollify $h$ at the corners so that it is a smooth radial function with $\nrm{h}_{L^\infty} \le 1, \nrm{h'}_{L^\infty}\le 1 + 1/10$, and $\nrm{h''}_{L^\infty}\le 10$ (say). Then set 
\begin{equation}\label{piece-of-self-similar}
\begin{split}
\zeta(x_1,x_2) = \sum_{\varepsilon_1, \varepsilon_2 = \pm 1} h(x_1 -2\varepsilon_1, x_2 - 2\varepsilon_2)
\end{split}
\end{equation}
 and for any $\ell \le 1$, \begin{equation*}
\begin{split}
\omega_{0,\ell}(x) := \zeta(\ell^{-1}x) 
\end{split}
\end{equation*} defines a smooth vorticity which is supported away from the origin and whose support is contained in $B_0(4\ell)$. We have $$\nrm{\omega_{0,\ell}}_{C^{0,1}} := \sup_{x \ne x'} \frac{\left| \omega_{0,\ell}(x) - \omega_{0,\ell}(x')\right|}{|x-x'|} \le (1 + \frac{1}{10})\ell.$$

Let $\omega_\ell(t)$ be the solution with initial data $\omega_{0,\ell}$ and $u_\ell(t) = \nabla^\perp\Delta^{-1}\omega_{\ell}(t)$. At $t = 0$, we compute\footnote{
	Here we compute in $\mathbb{R}^2$ rather than $\mathbb{T}^2$ for simplicity, but all the constants will be approximately the same at least for $0 < \ell \ll 1$.)}  using explicit formulas \begin{equation*}
\begin{split}
\rd_1 u_{1,\ell}(0,x) &= \frac{1}{2\pi}\int_{\mathbb{R}^2} \frac{2(y_1-x_1)(y_2-x_2)}{|x-y|^4}\omega_{0,\ell}(y) dy, \\
\rd_1 u_{2,\ell}(0,x) &= \frac{1}{2\pi}\int_{\mathbb{R}^2} \frac{(y_1-x_1)^2-(y_2-x_2)^2}{|x-y|^4}\omega_{0,\ell}(y) dy
\end{split}
\end{equation*}  that $\rd_1 u_{1,\ell}(0,x) = -\rd_2 u_{2,\ell} > 1/2$ for $x \in [0,0.5\ell]^2$ whereas $|\rd_1 u_{2,\ell}|, |\rd_2 u_{1,\ell}| < 1/20$ in the same region. From the smoothness of the Euler solution in time and space, it follows that these inequalities hold on the same ball for some interval of time. Then, using the system \eqref{eq:stretch-ODE} it is not difficult to show that at least during that time interval, stretching of the gradient $\rd_1\eta_1(t,x)$ occurs on $[0,0.5\ell]^2$.

This shows that vorticity supported on a ball of radius $\approx 2\ell$ (with odd symmetry) is able to stretch vorticity in a ball of radius $\approx 0.5\ell$ and pointing in an orthogonal direction, which is comparable with the third conclusion in GSK  (see also \cite[Section D]{GSK});
that is, vortices are most likely to be stretched in strain fields around $2$-$8$ times larger vortices. In  Appendix, we derive Kolmogorov's $-5/3$-law from 
GSK-type of self-similar hierarchy, and  ideally, \eqref{piece-of-self-similar} is the minimum piece of it.

\subsection{Large Lagrangian deformation by smoothed Bahouri-Chemin}
Here we show large Lagrangian deformation by smoothing the Bahouri-Chemin stationary solution. Recalling our convention that $\mathbb{T}^2 = \mathbb{R}^2/(2\mathbb{Z})^2$, the Bahouri-Chemin solution can be written as $\mathrm{sgn}(x_1)\mathrm{sgn}(x_2)$ where $|x_1|, |x_2| \le 1$. Given $n \ge 1$, we cut it near the axes as follows: \begin{equation*}
\begin{split}
\tilde{\omega}_n := \mathrm{sgn}(x_1)\mathrm{sgn}(x_2) \chi_{ \{ 2^{-n} < |x_1| , |x_2|< 1 - 2^{-n} \} } 
\end{split}
\end{equation*}
Now let $\varphi \in C^\infty_c(\mathbb{R}^2)$ be a standard mollifier;  a radial function whose support is contained in the unit ball. With $\varphi_{\ell}(x) := \ell^{-2}\varphi(\ell^{-1}x)$, we define 
\begin{equation}\label{BC-initial-data}
\omega_{0,n}^L := \varphi_{2^{-n-1}} * \tilde{\omega}_n.
\end{equation}

We recall a simple estimate of Yudovich (see e.g. \cite{EJ} for a proof): \begin{lemma*}
	Let $\omega(t) \in L^\infty([0,\infty): L^\infty(\mathbb{T}^2))$ be a solution of the 2D Euler equations on $\mathbb{T}^2$, and $\eta$ be the associated flow map. Then for some absolute constant $c > 0$, we have \begin{equation}\label{eq:Yud}
	\begin{split}
	|x-x'|^{1+ct\nrm{\omega_0}_{  L^\infty}} \le |\eta(t,x) - \eta(t,x')| \le |x-x'|^{1-ct\nrm{\omega_0}_{  L^\infty}},
	\end{split}
	\end{equation} for all $0 \le t \le 1$ and $|x-x'| \le 1/2$. 
\end{lemma*}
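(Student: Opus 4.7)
The plan is to reduce the claim to an ODE comparison for the separation $D(t) := |\eta(t,x) - \eta(t,x')|$, using the log-Lipschitz modulus of continuity of 2D velocity fields generated by bounded vorticity. A crucial simplification available in 2D is that $\nrm{\omega(t)}_{L^\infty} = \nrm{\omega_0}_{L^\infty}$ for all $t \ge 0$, since $\omega$ is transported by the area-preserving flow; this makes every subsequent estimate uniform in time.

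The main analytic input I would first establish is the log-Lipschitz bound
\begin{equation*}
|u(t,x) - u(t,y)| \le C\, \nrm{\omega_0}_{L^\infty}\, |x-y|\,\bigl(1 + \log(1/|x-y|)\bigr), \qquad 0 < |x-y| \le 1/2.
\end{equation*}
This follows from writing $u(t) = K_2 * \omega(t)$, splitting the integral at distance $\sim |x-y|$ from $x$, and using $L^1_{\mathrm{loc}}$-integrability of the kernel $\sim 1/|z|$ on the near piece (which gives the linear term) together with the mean value theorem and the integrable bound $\sim 1/|z|^2$ for the kernel gradient on the far piece (which produces the logarithm). On the periodic domain the reflection contributions are smooth and only add a bounded constant.

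Next I would insert this into the Lagrangian equation. Subtracting $\partial_t \eta(t,x) = u(t,\eta(t,x))$ at the two trajectories and taking the dot product with the unit vector along $\eta(t,x) - \eta(t,x')$ gives
\begin{equation*}
\Bigl| \tfrac{d}{dt} D(t) \Bigr| \le C\, \nrm{\omega_0}_{L^\infty}\, D(t)\bigl(1 + \log(1/D(t))\bigr)
\end{equation*}
as long as $D(t) \le 1/2$. Setting $\psi(t) := 1 + \log(1/D(t)) \ge 1$, this rewrites as $|\psi'(t)| \le C\, \nrm{\omega_0}_{L^\infty}\, \psi(t)$, so Grönwall gives $\psi(0)\, e^{-ct\nrm{\omega_0}_{L^\infty}} \le \psi(t) \le \psi(0)\, e^{ct\nrm{\omega_0}_{L^\infty}}$. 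Exponentiating back recovers the classical Yudovich double-exponential bound, and since on the relevant range the exponentials $e^{\pm ct \nrm{\omega_0}_{L^\infty}}$ are controlled above/below by $1 \pm c' t\nrm{\omega_0}_{L^\infty}$ (adjusting $c$), while the additive $+1$ in $\psi$ produces only a multiplicative $e^{\pm 1}$ factor that can be absorbed into the exponent when $|x-x'| \le 1/2$, one obtains exactly the two inequalities in the statement.

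The main obstacle is really the log-Lipschitz estimate itself, since after that the argument is mechanical ODE manipulation. A secondary technical point is the bootstrap justification that $D(t)$ remains $\le 1/2$ throughout: one starts with $D(0) = |x-x'| \le 1/2$ and continues the differential inequality up to the first exit time, beyond which the claimed upper bound on $D(t)$ is either trivially larger than the torus diameter or the lower bound is a power of a quantity bounded away from zero, so the conclusion extends to all of $[0,1]$ automatically.
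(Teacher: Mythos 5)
The paper does not actually prove this lemma --- it only cites \cite{EJ} --- and your argument is precisely the standard Yudovich proof given there: the log-Lipschitz estimate for velocities generated by bounded vorticity, the resulting differential inequality for the separation $D(t)$, the substitution $\psi=1+\log(1/D)$, and Gr\"onwall in both directions. It is correct, with the single caveat that converting the double-exponential exponents $e^{\pm ct\nrm{\omega_0}_{L^\infty}}$ into $1\pm c't\nrm{\omega_0}_{L^\infty}$ is legitimate in the lower-bound direction only when $t\nrm{\omega_0}_{L^\infty}$ stays in a bounded range (since $e^{s}\le 1+c's$ fails for large $s$), which is exactly the regime in which the paper applies the lemma ($\nrm{\omega_0}_{L^\infty}=1$, $t\le 1$).
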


\begin{prop}\label{BC}
	For any integer $n \ge 1$, consider the smoothed (at scale $2^{-n}$) Bahouri-Chemin initial data $\omega_{0,n}^L$. Then there exists a constant $\delta> 0$ such that for any $0 < t^*  \le \delta$, we have \begin{equation}\label{eq:LLD}
	\begin{split}
	\inf_{ |x| \le  2^{-(1+\mathfrak{c}_0t^*){n}} }\nrm{ D\eta(t^* , x) }_{L^\infty} \gtrsim \exp(cnt^* )
	\end{split}
	\end{equation} for some absolute constant $\mathfrak{c}_0 > 0$. That is, large Lagrangian deformation happens on a ball of radius $\sim 2^{-(1+\mathfrak{c}_0t^*)n}$ for some time. 
\end{prop}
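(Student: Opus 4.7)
The plan is to combine three ingredients: a direct Biot--Savart computation showing that the velocity generated by $\omega_{0,n}^L$ is approximately hyperbolic near the origin with strain rate of order $n$, the Yudovich modulus \eqref{eq:Yud} to confine Lagrangian trajectories to a neighbourhood where this hyperbolic structure remains valid, and a Gr\"onwall-type integration of the ODE $\partial_t D\eta = (\nabla u)(t,\eta)\,D\eta$.

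First I would carry out the initial strain computation. Using the odd--odd symmetry of $\omega_{0,n}^L$, together with the classical fact that $\nabla u$ for the unsmoothed Bahouri--Chemin vorticity has a precise logarithmic singularity at the origin that is truncated at the mollification scale $2^{-n}$, one obtains that for $|x|\le 2^{-n}$
\[
u(0,x) = (-a_n x_1,\, a_n x_2) + O(|x|),\qquad a_n = \alpha n + O(1),
\]
for some absolute constant $\alpha>0$. Next I would show that this hyperbolic structure persists on an $n$-independent interval $[0,\delta]$: because $\|\omega_n^L(t)\|_{L^\infty}\le 1$ and the odd--odd symmetry are conserved, and the initial data differs from the stationary Bahouri--Chemin profile only inside a set of measure $\lesssim 2^{-n}$, the Biot--Savart integral is stable in the sense that
\[
u(t,x) = (-\tilde a_n(t)\, x_1,\, \tilde a_n(t)\, x_2) + O(|x|)
\]
holds on $\{|x|\le 2^{-n}\}$ for $t\in[0,\delta]$, with $\tilde a_n(t)\ge \tfrac12\alpha n$. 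The key tool here is the log-Lipschitz bound for $\nabla u$ away from the origin combined with transport of $\omega_n^L$ by a velocity field of logarithmic modulus.

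Next I would use \eqref{eq:Yud} to confine trajectories. If $|x|\le 2^{-(1+\mathfrak{c}_0 t^*)n}$, then for $t\le t^*\le\delta$ Yudovich gives $|\eta(t,x)|\le |x|^{1-ct}\le 2^{-(1+\mathfrak{c}_0 t^*)(1-ct^*)n}\le 2^{-n}$, provided $\mathfrak{c}_0$ is chosen large enough (depending on the absolute constant $c$ and on $\delta$) so that $(1+\mathfrak{c}_0 t)(1-ct)\ge 1$ on $[0,\delta]$. Hence every trajectory starting from $\{|x|\le 2^{-(1+\mathfrak{c}_0 t^*)n}\}$ stays inside the ball $\{|y|\le 2^{-n}\}$ where the previous step applies. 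Integrating the Lagrangian ODE along such trajectories,
\[
\partial_t \log |\partial_1\eta_1(t,x)| \ge \tfrac12 \alpha n - C,
\]
and using $D\eta(0,x)=I$ gives $|\partial_1\eta_1(t^*,x)|\gtrsim \exp(cnt^*)$ uniformly on the ball, proving \eqref{eq:LLD}.

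The principal obstacle is the persistence statement in the second step: the strain tensor $\nabla u(t,0)$ depends on the \emph{global} distribution of $\omega_n^L(t)$, so showing that it remains comparable to $\nabla u(0,0)$ uniformly in $n$ requires a quantitative stability argument for the Biot--Savart integral under the non-stationary evolution. This is essentially the delicate estimate that Bourgain--Li \cite{BL} established, and the quantitative sharpening here is precisely what allows the deformation region to be a ball of radius $2^{-(1+\mathfrak{c}_0 t^*)n}$ rather than a single point, and (by Lemma \ref{lem:LLD}) what feeds into Theorems \ref{thm:main} and \ref{thm:main2}.
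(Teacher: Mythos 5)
Your overall architecture (strain lower bound at the origin via Biot--Savart, persistence in time, Yudovich confinement, integration of the variational ODE) matches the paper's, but two steps are genuinely incomplete. First, the persistence of $\rd_1u_1(t,0)\gtrsim n$ on $[0,\delta]$, which you correctly identify as the principal obstacle, is left to a vague ``stability of the Biot--Savart integral'' and a deferral to Bourgain--Li; the paper resolves it with a short self-contained confinement argument that you should supply: by the Yudovich modulus \eqref{eq:Yud}, particles starting on $\rd([2^{-n+1},1-2^{-n+1}]^2)$ satisfy $|\eta_1(t,x)|\le 2^{(-n+1)(1-ct)}\le 2^{-n/2}$ for $t\le\delta:=1/(2c)$, so $\omega_n^L(t)\equiv 1$ on $[2^{-n/2},1/2]^2$ for all such $t$, and then $\rd_1u_1(t,0)=\frac{4}{\pi}\int_{(\mathbb{R}_+)^2}\frac{y_1y_2}{|y|^4}\omega(t,y)\,dy\ge c_0 n$ follows by integrating over that square (the far-field contribution being $O(1)$ by cancellation). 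No comparison with the stationary unsmoothed profile is needed.

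Second, your claim that $u(t,x)$ equals its linear hyperbolic part plus an $n$-uniform $O(|x|)$ error on all of $\{|x|\le 2^{-n}\}$ fails for $t>0$: since $\nrm{\omega(t)}_{C^1}\lesssim 2^{(1+ct)n}$, interpolation only gives $\nrm{\nabla u(t)}_{C^{1/2}}\lesssim 2^{(1+ct)n/2}$, hence $|\nabla u(t,x)-\nabla u(t,0)|\ll n$ only for $|x|\lesssim 2^{-(1+ct)n}$ — on the ball of radius $2^{-n}$ the deviation can be as large as $2^{ctn/2}\gg n$. This is exactly why the deformation ball in \eqref{eq:LLD} has radius $2^{-(1+\mathfrak{c}_0t^*)n}$ rather than $2^{-n}$; your Yudovich confinement must therefore target the smaller ball $\{|\eta|\le 2^{-(1+ct)n}\}$ (which it does deliver from $|x|\le 2^{-(1+2ct^*)n}$), not $\{|\eta|\le 2^{-n}\}$. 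Finally, the one-line Gr\"onwall bound $\rd_t\log\rd_1\eta_1\ge\frac12\alpha n-C$ silently discards the coupling term $\rd_2u_1(t,\eta)\,\rd_1\eta_2$ in \eqref{eq:stretch-ODE}, whose companion equation carries the $O(1)$ vorticity term $\omega(t,\eta)\rd_1\eta_1$; one needs a continuity/bootstrap argument such as $\rd_1\eta_1(t)>4|\rd_1\eta_2(t)|$ to close the estimate.
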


\begin{remark}
	The estimate \eqref{eq:LLD} is sharp. To see this, recall the standard   estimate for the 2D Euler equations (see e.g. \cite[Theorem 2.1, Proposition 2.2]{KS}) 
\begin{equation}\label{log-exponential-estimate}
	\begin{split}
	\nrm{\nabla u(t)}_{L^\infty}  \lesssim \nrm{\omega_0}_{L^\infty}\left(1+ \log\frac{\nrm{\omega_0}_{C^1}}{\nrm{\omega_0}_{L^\infty}}\right)  \exp(C\nrm{\omega_0}_{L^\infty}t). 
	\end{split}
	\end{equation} 
{Strictly speaking, in \cite{KS}, the bound is stated in terms of the $C^\alpha$-norm of the vorticity with $0 < \alpha <1$, but one may simply pick some $0 < \alpha <1$ and use $\nrm{\omega_0}_{C^\alpha} \lesssim \nrm{\omega_0}_{C^1}$.}
Applying this to the initial data in the above gives $\nrm{\nabla u(t)}_{L^\infty} \lesssim n$, on $t \in [0,1]$, {simply because $\nrm{\omega_0}_{C^1} \lesssim 2^{cn}$ for some $c>0$.} Hence we have the upper bound \begin{equation*}
	\begin{split}
	\nrm{D\eta(t)}_{L^\infty} \le \exp\left( 2\int_0^t \nrm{\nabla u(s)}_{L^\infty} ds \right) \le \exp\left( Cnt \right). 
	\end{split}
	\end{equation*}
For the upper bound on more general setting (general dimension), we refer Theorem 2.1 and  Proposition 2.3 in \cite{CD}.
\end{remark}

\begin{proof}
	We note that $\omega^L_{0,n}$ is odd with respect to both axis, $\omega^L_{0,n} = 1$ on $[2^{-n+1},1-2^{-n+1}]^2 $, and vanishes on $([0,1] \backslash [2^{-n-1},1-2^{-n-1}])^2$. From now on, we fix some large $n$ and omit the dependence of $\omega_n$ and $\eta_n$ in $n$ and simply write $\omega$ and $\eta$. We claim that for some absolute constant $\delta > 0$, \begin{equation*}
	\begin{split}
	\omega_n^L(t,x) \equiv 1 \quad \mbox{on}\quad [0,\delta]\times [2^{-n/2},1/2]^2. 
	\end{split}
	\end{equation*} To show this, it suffices to observe that fluid particles starting from $\partial([2^{-n+1},1-2^{-n+1}]^2 )$ cannot reach the internal square $(2^{-n/2},1/2)^2$ within time $\delta$. For this we need to consider four sides of $\partial([2^{-n+1},1-2^{-n+1}]^2 )$. We only consider the left side, as the other sides can be treated in a similar way. To this end take a point of the form $x = (2^{-n+1},a)$ for some $2^{-n+1} \le a \le 1-2^{-n+1}$. Setting $x' = (0,a)$ and applying \eqref{eq:Yud}, we obtain \begin{equation*}
	\begin{split}
	|\eta_1(t,x) - \eta_1(t,x')| \le 2^{(-n+1)(1-ct)} \le 2^{-n/2}
	\end{split}
	\end{equation*} for $t \le \delta := 1/(2c)$, since $\nrm{\omega_0}_{ L^\infty} = 1$. Since $\eta_1(t,x') = 0$  and $\eta_1(t,x) > 0$ for all $t$, we deduce that $\eta_1(t,x) \le 2^{-n/2}$.
	
	From now on we shall restrict to $t \in [0,\delta]$. Using explicit formulas \begin{equation*}
	\begin{split}
	\rd_1 u_1(t,0) = \frac{4}{\pi} \int_{(\mathbb{R}_+)^2} \frac{y_1y_2}{|y|^4} \omega(y) dy = - \rd_2 u_2(t,0) 
	\end{split}
	\end{equation*} (where we have extended $\omega$ to $\mathbb{R}^2$ by periodicity), we obtain a lower bound \begin{equation}\label{eq:lb}
	\begin{split}
	\rd_1 u_1(t,0) = - \rd_2 u_2(t,0) \ge c_0 n 
	\end{split}
	\end{equation} for some $c_0 > 0$ with $n$ sufficiently large. {To see this lower bound, we write \begin{equation*}
		\begin{split}
		\int_{(\mathbb{R}_+)^2} \frac{y_1y_2}{|y|^4} \omega(y) dy  = \int_{[0,1]^2} \frac{y_1y_2}{|y|^4} \omega(y) dy  + \int_{(\mathbb{R}_+)^2\backslash[0,1]^2} \frac{y_1y_2}{|y|^4} \omega(y) dy ,
		\end{split}
		\end{equation*} and one can note that while the first integral can be evaluated to satisfy $\gtrsim n$, the second integral is bounded uniformly in $n$, after using that the integral of vorticity over $[n_1-1,n_1+1]\times [n_2-1,n_2+1] = 0$ for any $n_1,n_2$ (see for instance \cite{Z}).}  On the other hand, $\rd_1 u_2(t,0) = \rd_2 u_1(t,0) = 0$ by odd symmetry.

In order to estimate $\partial_1u_1$ and $\partial_2u_1$ not only at the origin but also in a small ball, we shall use the estimates for the 2D Euler solutions: with $\nrm{\omega_0}_{L^\infty} = 1$, it is well-known that   \begin{equation*}
	\begin{split}
	1 + \log\left( 1  + \nrm{\omega(t)}_{C^{1}} \right) \le  \left( 1 + \log\left( 1  + \nrm{\omega_0}_{C^{1}} \right) \right) \exp(Ct) 
	\end{split}
	\end{equation*} (cf. \cite[Theorem 2.1]{KS}). Now note that $\nrm{\omega_0}_{C^{1}} \approx 2^{n}$. By $ \exp(Ct)\lesssim 1+ct$ ($0<t<\delta$), we therefore obtain \begin{equation*}
	\begin{split}
	\nrm{\omega(t)}_{C^1} \lesssim 2^{(1+ct)n},\qquad t \in [0,\delta],
	\end{split}
	\end{equation*} by taking $\delta$ smaller if necessary. Then we use \begin{equation*}
	\begin{split}
	\nrm{\nabla u(t)}_{C^{\frac{1}{2}}} \lesssim \nrm{\omega(t)}_{C^{\frac{1}{2}}} \lesssim \nrm{\omega(t)}_{C^1}^{\frac{1}{2}} \lesssim 2^{\frac{1+ct}{2}n}.
	\end{split}
	\end{equation*} We then obtain \begin{equation*}
	\begin{split}
	|\nabla u(t,x) - \nabla u(t,x')| \lesssim 2^{\frac{1+ct}{2}n}|x-x'|^{\frac{1}{2}} <  \frac{c_0}{10}n , 
	\end{split}
	\end{equation*} for $|x-x'|<2^{-(1+ct)n}$ and $n$ sufficiently large, where $c_0 > 0$ is from \eqref{eq:lb}. Applying the above with $x' = 0$, we conclude the following:  
	 \begin{equation}\label{eq:lower}
	\begin{split}
	\inf_{ x \in (\mathbb{R}_+)^2 \cap B(0,2^{-(1+ct)n} ) } \inf_{t \in [0,\delta]} \rd_1 u_1(t,x) \ge \frac{9c_0}{10}n,
	\end{split}
	\end{equation} \begin{equation}\label{eq:upper}
	\begin{split}
	\sup_{ x \in (\mathbb{R}_+)^2 \cap B(0,2^{-(1+ct)n}) } \sup_{t \in [0,\delta]} |\rd_2 u_1(t,x) | \le  \frac{c_0}{10}n. 
	\end{split}
	\end{equation} 

	Now we consider the following system of ODEs: for each $x $,  denoting for simplicity $\eta := \eta(t,x)$ and $\rd_i\eta_j(t):= \rd_i\eta_j(t,x)$, \begin{equation}\label{eq:stretch-ODE}
	\begin{split}
	\frac{d}{dt} \rd_1 \eta_1(t) &= \rd_1 u_1(t,\eta ) \rd_1\eta_1(t) + \rd_2 u_1(t,\eta ) \rd_1\eta_2(t) \\
	\frac{d}{dt} \rd_1 \eta_2(t) &= -\rd_1 u_1(t,\eta ) \rd_1\eta_2(t) + (\rd_2 u_1(t,\eta ) +\omega(t,\eta))  \rd_1\eta_1(t) .
	\end{split}
	\end{equation} We claim that if $x$ satisfies \begin{equation}\label{eq:bound-condition}
	\begin{split}
	|\eta(t,x)| \le  2^{-(1+ct)n} 
	\end{split}
	\end{equation} for all $t \in [0,\delta]$, then \begin{equation*}
	\begin{split}
	\rd_1\eta_1(t,x) \ge \exp\left( \frac{c_0}{10}nt \right),\quad \forall t \in [0,\delta]. 
	\end{split}
	\end{equation*} This can be proved by observing that we have, on the same time interval, $\rd_1\eta_1(t) > 4|\rd_1\eta_2(t)|$ which follows from a continuity argument in time and the bounds \eqref{eq:upper}, \eqref{eq:lower}.   {For this, fix some $t > 0$ and consider two cases: $\rd_1\eta_2(t) \ge 0$ and $\rd_1\eta_2(t) < 0$. In the first case, after some rearranging we have \begin{equation*}
		\begin{split}
		\frac{d}{dt}\left( \rd_1\eta_1(t) - 4|\rd_1\eta_2(t)| \right)& = \rd_1u_1(\rd_1\eta_1 + 4\rd_1\eta_2) + \rd_2u_1 \rd_1\eta_2 - 4(\rd_2u_1+\omega)\rd_1\eta_1 \\
		& > (\rd_1u_1-4 (|\rd_2 u_1| + 1))(\rd_1\eta_1 + 4\rd_1\eta_2) + 4(|\rd_2u_1|+1)\rd_1\eta_1 \\
		&\quad + 16|\rd_2 u_1|\rd_1\eta_2 - |\rd_2u_1|\rd_1\eta_2 - 4(|\rd_2u_1|+1) \rd_1\eta_1 > 0. 
		\end{split}
		\end{equation*} In the other case, we instead have \begin{equation*}
		\begin{split}
		\frac{d}{dt}\left( \rd_1\eta_1(t) - 4|\rd_1\eta_2(t)| \right)& > 
		 (\rd_1u_1-4 (|\rd_2 u_1| + 1))(\rd_1\eta_1 + 4|\rd_1\eta_2|)  + 4(|\rd_2u_1|+1)\rd_1\eta_1 \\
		 & \quad + 16|\rd_2u_1||\rd_1\eta_2| - 4(|\rd_2u_1|+1)\rd_1\eta_1 >0 . 
		\end{split}
		\end{equation*} In the above estimates, we have used that $\rd_1\eta_1 > 0$, which follows again with a continuity argument in time and observing the bound \begin{equation*}
		\begin{split}
		\frac{d}{dt}\rd_1\eta_1 > (\rd_1u_1-\frac{1}{4}|\rd_2u_1|)\rd_1\eta_1 + \frac{1}{4}|\rd_2u_1|(\rd_1\eta_1 - 4|\rd_1\eta_2|),
		\end{split}
		\end{equation*} which only increases as long as $\rd_1\eta_1 - 4|\rd_1\eta_2| \ge 0$. 
	}
More precisely, even if there is $t$ such that $\partial_1\eta_2(t)=0$, that is,
$\partial_1\eta_1(t)-4|\partial_1\eta_2(t)|=0$, then $\partial_t\partial_1\eta_1(t)>0$.
This means $\partial_1\eta_1(t)>0$ for all $t>0$.
In turn, $\rd_1\eta_1(t) > 4|\rd_1\eta_2(t)|$ implies $|\rd_2 u_1(t,\eta ) \rd_1\eta_2(t) | < \rd_1 u_1(t,\eta ) \rd_1\eta_1(t)/4$, so that \begin{equation*}
	\begin{split}
	\frac{d}{dt} \rd_1 \eta_1(t) \ge \frac{3}{4} \rd_1 u_1(t,\eta ) \rd_1\eta_1(t)
	\end{split}
	\end{equation*} which gives the claim. It only remains to check the condition \eqref{eq:bound-condition}. Let $t^* \le \delta$ and using \eqref{eq:Yud} again, \begin{equation*}
	\begin{split}
	|\eta(t^*,x)| \le |x|^{1 - ct^* } \le  2^{-(1+ct)n} 
	\end{split}
	\end{equation*} for all sufficiently large $n$ if $|x| \le  2^{-(1+2ct^*)n}$.  This finishes the proof. 
\end{proof}

\subsection{Large Lagrangian deformation by Bourgain-Li bubbles}

We now recall the construction of odd-odd ``bubbles'' as in \cite{BL}. Given any smooth radial bump function $0 \leq \phi \leq 1$ with support in the ball $B(0,1/4)$ 
let 
\begin{align*} 
\phi_0(x_1, x_2) 
= 
\sum_{\varepsilon_1, \varepsilon_2 = \pm 1} 
\varepsilon_1 \varepsilon_2 \phi(x_1 {-} \varepsilon_1, x_2 {-} \varepsilon_2). 
\end{align*} 
We assume further that $\phi = 1$ in the ball $B(0,1/8)$. Clearly, the function $\phi_0$ is odd with respect to both $x_1$ and $x_2$. 
Define 
\begin{align}\label{eq:bubbles} 
\omega_{0,n}^L(x)  
=  
\sum_{k=1}^{n} a_k \phi_k( x) 
\end{align}  for some bounded sequence of non-negative numbers $\{a_k \}$, 
where 
$\phi_k(x) =  \phi_0 (2^k x)$.
Note that the supports of $\phi_k$ are disjoint and compact. Without loss of generality, we may assume that $\sup_{k=1}^{\infty} a_k \le 1$, so that $\nrm{\omega_{0,n}}_{L^\infty} \le 1$ uniformly in $n$. The further regularity of $\omega_{0,n}^L$ depends upon the asymptotic behavior of the coefficients $\{ a_k \}$ as $k \rightarrow \infty$. In the $L^\infty$-normalized case, i.e. \begin{equation}\label{eq:bubbles-Linfty}
\begin{split}
\omega_{0,n}^L(x)  = \sum_{k=1}^{n}  \phi_0(2^k x) , 
\end{split}
\end{equation} we have $\omega_{0,n}^{L} \in H^s \cap L^\infty$ uniformly in $n$ for all $s < 1$. When $a_k = k^{-1/2-\eps}$ for some $\eps>0$, $\omega_{0,n}^L \in H^1$ uniformly in $n$, and this specific choice of coefficients was utilized in \cite{BL} to show ill-posedness of the 2D Euler equations in $H^1$. 

\begin{prop}\label{prop:LLD-bubbles}
	Let $\{a_k\}_{k=1}^\infty$ be a bounded sequence of non-negative reals, and  $\omega_n^L(t) $ be the solution with initial data as in \eqref{eq:bubbles} with associated Lagrangian flow $\eta_n$. Set $S_k := S_{k-1} + a_{k}$ for $k \ge 1$ with $S_0 := 1$. Then, for some absolute constant $c_0 > 0$, we have \begin{equation}\label{eq:LLD-bubbles}
	\begin{split}
	|D\eta_n(t^*,0)|  \gtrsim (S_nt^*)^{c_0}
	\end{split}
	\end{equation}  for all $t^* \in [0,1]$ and $n$ sufficiently large. In particular, assuming $S_n$ is divergent in $n$, we have \begin{equation}\label{eq:LLD-bubbles-shorttime}
	\begin{split}
	|D\eta_n(S_n^{-\frac{1}{2}},0)| \gtrsim S_n^{\frac{c_0}{2}} 
	\end{split}
	\end{equation} again for all sufficiently large $n$. 
\end{prop}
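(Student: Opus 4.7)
\emph{Reduction by symmetry.} The odd-odd symmetry of \eqref{eq:bubbles} is preserved by the 2D Euler flow, so $\eta_n(t,0)\equiv 0$, and moreover $\rd_1 u_{n,2}(t,0)=\rd_2 u_{n,1}(t,0)=0$ identically (since $u_{n,1}$ is odd in $x_1$ and even in $x_2$, and vice versa for $u_{n,2}$). The $x=0$ instance of \eqref{eq:stretch-ODE} therefore decouples into the scalar equation $\tfrac{d}{dt}\rd_1\eta_{n,1}(t,0)=\rd_1u_{n,1}(t,0)\,\rd_1\eta_{n,1}(t,0)$, and incompressibility gives $\rd_2\eta_{n,2}(t,0)=1/\rd_1\eta_{n,1}(t,0)$. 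Thus the proposition reduces to the integrated lower bound
\[
\int_0^{t^*}\rd_1 u_{n,1}(s,0)\,ds\;\gtrsim\;c_0\log(S_n t^*)\qquad\text{for }t^*\geq 1/S_n,
\]
the regime $t^*\leq 1/S_n$ being automatic as explained below.

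\emph{Strain at the origin.} At $t=0$, the Biot-Savart computation of Proposition \ref{BC} gives
\[
\rd_1 u_{n,1}(0,0)=\frac{4}{\pi}\int_{(\mathbb R_+)^2}\frac{y_1 y_2}{|y|^4}\,\omega_{0,n}^L(y)\,dy.
\]
The kernel is homogeneous of degree $-2$, which exactly balances the two-dimensional measure scaling $y\mapsto 2^{-k}y$, so the $k$-th bubble $a_k\phi_k$ contributes a positive scale-independent constant times $a_k$. Summing over $k=1,\dots,n$ yields $\rd_1 u_{n,1}(0,0)\gtrsim S_n-O(1)\gtrsim S_n$ for $n$ large.

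\emph{Scale-by-scale persistence.} The main dynamical step is to show that a bubble stops contributing to the integral only once it has been attracted to one of the axes. The $k$-th bubble feels strain $\lesssim S_{k-1}$ from the larger bubbles (which look approximately linear on the support of $\phi_k$), self-strain $\lesssim 1$, and an $O(1)$ contribution from all smaller bubbles (whose disjoint-support $L^1$ vorticities yield a uniformly bounded $\nabla u$ at distance $\sim 2^{-k}$ thanks to odd-odd cancellation). A bootstrap argument modeled on Proposition \ref{BC} at each scale $2^{-k}$, using Yudovich's bound \eqref{eq:Yud} together with incompressibility, then shows that throughout the window $[0,c/S_k]$ the $k$-th bubble remains a fixed-aspect-ratio blob inside a cone of positive opening angle away from both axes, with preserved $L^\infty$-norm $a_k$ and area $\sim 2^{-2k}$. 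Direct substitution in the Biot-Savart integral shows such a blob contributes $\gtrsim a_k$. Defining
\[
k(t):=\max\{k\leq n:S_k t\leq c\},
\]
the surviving bubbles supply
\[
\rd_1 u_{n,1}(t,0)\;\gtrsim\;S_{k(t)}\;\sim\;\min(S_n,\,1/t)\qquad(t>0).
\]

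\emph{Conclusion and main obstacle.} For $t^*\leq c/S_n$ the integrand is $\gtrsim S_n$ throughout, whence $\rd_1\eta_{n,1}(t^*,0)\geq\exp(cS_nt^*)\gg(S_nt^*)^{c_0}$. For $t^*>c/S_n$, split at $s_0:=c/S_n$: the $[0,s_0]$-contribution is $\gtrsim 1$, while
\[
\int_{s_0}^{t^*} S_{k(s)}\,ds\;\gtrsim\;\int_{s_0}^{t^*}\frac{ds}{s}\;=\;\log(S_n t^*/c),
\]
and exponentiation produces \eqref{eq:LLD-bubbles}. The shortime statement \eqref{eq:LLD-bubbles-shorttime} is the special case $t^*=S_n^{-1/2}$. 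The chief obstacle is the persistence step: one must quantify that the distortion of the $k$-th bubble is governed by the \emph{local} strain of order $S_k$ rather than by the global strain of order $S_n$. This is essentially a self-similar induction on $k$, and the rapid ``attraction to the axes'' noted in the introduction only bounds the time at which each bubble drops out of the integral, not its amplitude during the window in which it is still present.
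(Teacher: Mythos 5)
Your plan follows essentially the same route as the paper: reduce to a lower bound on $\int_0^{t^*}\rd_1u_{n,1}(s,0)\,ds$ (which by odd--odd symmetry is exactly the ``key integral'' $I(t)$ at the origin), decompose it into per-bubble contributions $I_k$, show the $k$-th bubble keeps contributing $\gtrsim a_k$ on a window of length $\sim 1/S_k$, and sum $\sum a_k/S_k \approx \log S_n$ over the surviving scales. Your scale-invariance computation for $I_k(0)\gtrsim a_k$ and the final integration/splitting at $s_0=c/S_n$ match the paper. The one place where your write-up stops short is precisely the step you flag as the chief obstacle: the shape-persistence of the $k$-th bubble on $[0,c_1/S_k]$ is asserted (``a bootstrap argument \dots then shows'') rather than proved, and this induction is the entire technical content of the paper's argument. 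The paper closes it with the Kiselev--Sverak Key Lemma \eqref{eq:keyLemma}, which writes $u = rI(t,r)(\cos\theta,-\sin\theta)^T + rB$ with $\nrm{B}_{L^\infty}\lesssim\nrm{\omega}_{L^\infty}$; since $I(t,r)$ for $r\sim 2^{-k}$ only sees the bubbles of index $\le k$ (whose shapes are controlled by the induction hypothesis), both the radial drift $\frac{d}{dt}\ln|\eta|$ and the angular drift $\frac{d}{dt}\theta(\eta)$ of points in the $k$-th bubble are $\lesssim S_k$, so the bubble stays pinned on a fixed smaller rectangle in polar coordinates for time $c_1/S_k$ with $c_1$ \emph{independent of $k$}. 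That uniformity in $k$ is exactly the quantitative content your sketch needs to make ``local strain $S_k$ rather than global strain $S_n$'' rigorous, so you should make the Key Lemma (or an equivalent linearization of the outer bubbles' influence) an explicit ingredient of the persistence step.
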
 

\begin{proof}
	To begin with, we shall fix some $n$ large and write $\eta = \eta_n$ for simplicity. Moreover, we note that we may assume $S_n$ is divergent in $n$, since otherwise \eqref{eq:LLD-bubbles} trivially holds for $t^* \in [0,1]$ as $|D\eta(t^*,0)| \ge 1$ always. We systematically use the following ``Key Lemma'' due to Kiselev and Sverak \cite{KS} (more precisely, a version on $\mathbb{T}^2$ from \cite{Z}), written in the polar coordinates 
 for convenience: 
 
\noindent \textbf{Key Lemma}. \textit{ Assume the vorticity on $\mathbb{T}^2$ is bounded and odd with respect to both axis. Then $u = \nabla^\perp\Delta^{-1}\omega$ satisfies \begin{equation}\label{eq:keyLemma}
	\begin{split}
	u(t,r,\theta) = \begin{pmatrix}
	\cos\theta \\
	-\sin\theta
	\end{pmatrix} r I(t,r) + r B(t,r,\theta)
	\end{split}
	\end{equation} for $|r| \le 1/2$, where \begin{equation*}
	\begin{split}
	I(t,r) := \frac{4}{\pi}\int_0^{\pi/2}\int_{  2r}^{1 } \frac{\sin(2\theta')}{s} \omega(t,s,\theta') dsd\theta'
	\end{split}
	\end{equation*} and \begin{equation*}
	\begin{split}
	\nrm{B(t)}_{L^\infty} \le C \nrm{\omega(t)}_{L^\infty}
	\end{split}
	\end{equation*} for some absolute constant $C > 0$. }
	
	For simplicity, we shall set $I(t) := I(t,0)$ which is well-defined as the vorticities we consider is always vanishing in a small neighborhood of the origin. We now give a brief outline of the argument. The goal is to estimate the time integration of the ``key integral'' $I(t)$, since then we deduce \begin{equation*}
	\begin{split}
	|D\eta(t^*,0)| \gtrsim \exp\left( c \int_0^{t^*} I(t) dt \right) . 
	\end{split}
	\end{equation*}  In turn, we may write \begin{equation*}
	\begin{split}
	I(t) = \sum_{ k = 1}^n I_k(t)
	\end{split}
	\end{equation*} where \begin{equation*}
	\begin{split}
	I_k (t) := \frac{4}{\pi} \int_0^{\pi/2} \int_0^{1/2} \frac{\sin(2\theta)}{r} a_k \phi_k(\eta^{-1}(t))(r,\theta) drd\theta 
	\end{split}
	\end{equation*} is the contribution to $I(t)$ from the bubble initially located at the scale $2^{-k}$. 
	
	Our strategy is to establish the following assertion  inductively in $k$: The  ``shape'' of the $k$-th bubble essentially remains the same within the time scale $t_k := c_1/S_k$ for some absolute constant $c_1 > 0$. Here what we mean by shape will be made precise below. Assuming for a moment that this statement holds,  we obtain that \begin{equation*}
	\begin{split}
	\int_0^{t_k} I_k(t) dt \gtrsim t_k I_k(0) \gtrsim \frac{a_k}{S_k}.
	\end{split}
	\end{equation*}  Then we have  \begin{equation*}
	\begin{split}
	\int_0^{t^*} I(t) dt \ge \sum_{k=1}^{n}\int_0^{\min\{t_k,t^*\}} I_k(t) dt \gtrsim \sum_{1\le k \le n, c_1 < t^*S_k} \frac{a_k}{S_k} 
	\end{split}
	\end{equation*} owing to the non-negativity of each $I_k(t)$. We now observe that, by approximating the sum with a Riemann integral, \begin{equation*}
	\begin{split}
	\sum_{k=1}^n \frac{a_k}{S_k} \approx \log(S_n) 
	\end{split}
	\end{equation*} and taking $k^*$ be the smallest number satisfying \begin{equation*}
	\begin{split}
	S_{k^*}t^* > c_1,
	\end{split}
	\end{equation*} (which exists by taking $n$ larger if necessary, since $t^* > 0$ and we are assuming that the sequence $S_{k^*}$ is divergent) \begin{equation*}
	\begin{split}
	\sum_{k=1}^{k^*} \frac{a_k}{S_k} \approx \log(S_{k^*}) \approx \log(c_1/t^*). 
	\end{split}
	\end{equation*} This gives \eqref{eq:LLD-bubbles}  and \eqref{eq:LLD-bubbles-shorttime} follows by taking $t^* = S_n^{-\frac{1}{2}}$. Hence it is sufficient to prove that \begin{equation*}
	\begin{split}
	I_k(t) \gtrsim I_k(0),\qquad t \in [0,t_k]
	\end{split}
	\end{equation*} uniformly in $k$. Below we shall formulate and prove a claim which implies the above lower bound.

	\medskip
	
	\textit{Step I: some preparations}
	
	\medskip
	
	We make some simple observations regarding the evolution of the bubbles. Recall from the definition of $\phi_0$ that restricted on to the positive quadrant, there exist ``rectangles'' $$\overline{R}_0 = \left\{ (r,\theta) : \overline{r}_1 < r < \overline{r}_2, \overline{\theta}_1 < \theta < \overline{\theta}_2 \right\}$$ and $$\underline{R}_0= \left\{ (r,\theta) : \underline{r}_1 < r < \underline{r}_2, \underline{\theta}_1 < \theta < \underline{\theta}_2 \right\}$$ such that \begin{equation*}
	\begin{split}
	\phi_0 = 1 \mbox{ on }  \underline{R}_0 \quad \mbox{ and } \quad \phi_0 = 0  \mbox{ outside of }  \overline{R}_0. 
	\end{split}
	\end{equation*} We may set \begin{equation*}
	\begin{split}
	\frac{1}{2} < \overline{r}_1 < \underline{r}_1 < \underline{r}_2 < \overline{r}_2 < 2 
	\end{split}
	\end{equation*} and \begin{equation*}
	\begin{split}
	\frac{\pi}{6} < \overline{\theta}_1 < \underline{\theta}_1 < \underline{\theta}_2 < \overline{\theta}_2 < \frac{\pi}{3} . 
	\end{split}
	\end{equation*} Now by simple scaling, with the $2^{-k}$-scaled rectangles $\underline{R}_k$ and $\overline{R}_k$, we have \begin{equation*}
	\begin{split}
	\phi_k = 1 \mbox{ on }  \underline{R}_k \quad \mbox{ and } \quad \phi_k = 0  \mbox{ outside of }  \overline{R}_k, 
	\end{split}
	\end{equation*} still restricted on the first quadrant (more precisely on $[0,1]^2$). This time, take an even smaller rectangle: \begin{equation*}
	\begin{split}
	\underline{R}_0^* = \left\{ (r,\theta) : \underline{r}_1^* < r < \underline{r}_2^*, \underline{\theta}_1^* < \theta < \underline{\theta}_2^* \right\} \subset \underline{R}_0
	\end{split}
	\end{equation*} where we may set \begin{equation*}
	\begin{split}
	\underline{r}_1^* = \frac{2\underline{r}_1 + \underline{r}_2}{3}, \quad \underline{r}_2^* = \frac{ \underline{r}_1 + 2\underline{r}_2}{3}
	\end{split}
	\end{equation*} and similarly \begin{equation*}
	\begin{split}
	\underline{\theta}_1^* = \frac{2\underline{\theta}_1 + \underline{\theta}_2}{3}, \quad \underline{\theta}_2^* = \frac{ \underline{\theta}_1 + 2\underline{\theta}_2}{3}. 
	\end{split}
	\end{equation*} Then as before define \begin{equation*}
	\begin{split}
	\underline{R}_k^* := \left\{ (r,\theta) : \underline{r}_1^* < 2^kr < \underline{r}_2^*, \underline{\theta}_1^* < \theta < \underline{\theta}_2^* \right\}. 
	\end{split}
	\end{equation*} Moreover, define \begin{equation*}
	\begin{split}
	\overline{A}_k := \{ (r,\theta) : 2^{-k-1} < r < 2^{1-k} , 0 < \theta < \frac{\pi}{2}  \}. 
	\end{split}
	\end{equation*} We shall now prove the following
	
	\smallskip
	
	\textbf{Claim.}  In the time interval $[0,t_k]$, the $k$-th bubble remains $a_k$ on the rectangle $\underline{R}_k^*$ and vanishes outside $\overline{A}_k$. Here $t_k := c_1/S_k$ with $c_1 > 0$ independent of $k$.
	
	\smallskip	
	
	\noindent This is what we mean by retaining the same ``shape''.
	We now rewrite the evolution of the trajectories in polar coordinates, using \eqref{eq:keyLemma}. Given some $x \in [0,1]^2$, we shall express the point $\eta(t,x)$ using $|\eta|$ and $\theta(\eta)$. Then, \begin{equation}\label{eq:traj-polar-rad}
	\begin{split}
	\frac{d}{dt}|\eta| & = u(t,\eta) \cdot \begin{pmatrix}
	\cos(\theta(\eta)) \\
	\sin(\theta(\eta)) 
	\end{pmatrix} \\
	& = |\eta| \left( \cos(2\theta(\eta)) I(t,|\eta|) + (\cos(\theta( \eta))B_1 + \sin(\theta(\eta))B_2) \right) 
	\end{split}
	\end{equation} and \begin{equation}\label{eq:traj-polar-angle}
	\begin{split}
	\frac{d}{dt}\theta(\eta) & = \frac{u(t,\eta)}{|\eta|} \cdot \begin{pmatrix}
	-\sin(\theta(\eta)) \\
	\cos(\theta(\eta)) 
	\end{pmatrix} \\
	& = -\sin(2\theta(\eta))  I(t,|\eta|) +  (-\sin(\theta( \eta))B_1 + \cos(\theta(\eta))B_2)
	\end{split}
	\end{equation} where $B = (B_1, B_2)$ is from \eqref{eq:keyLemma}.

	\medskip
	
	\textit{Step II: induction base case $k = 1$}
	
	\medskip
	We shall be concerned with the bubble $\phi_1$ and the trajectories $\eta(t,x)$ where $x \in \supp(\phi_1)$. Using the Yudovich estimate \eqref{eq:Yud} with $x' = 0$, we see that such trajectories are trapped inside the region $\{ 2^{-2} \le r \}$ during $[0,t_1]$ by choosing $c_1 > 0$ depending only on  $c\nrm{\omega_0}_{L^\infty}$ in \eqref{eq:Yud}. Similarly, trajectories starting from $\cup_{k>1}\supp(\phi_k)$ cannot cross the circle $\{ r = 2^{-2}\}$. This results in a naive bound \begin{equation*}
	\begin{split}
	I_1(t,|\eta|) \le I_1(t,2^{-2}) \lesssim a_1 
	\end{split}
	\end{equation*} on the same time interval. We use this to obtain slightly improved estimates on $|\eta|$ in \eqref{eq:traj-polar-rad}: \begin{equation*}
	\begin{split}
	\left| \frac{d}{dt} \ln \frac{1}{|\eta|} \right| \lesssim S_1  
	\end{split}
	\end{equation*} using $|B| \lesssim 1$. This guarantees that, given any small $\epsilon > 0$, by taking $c_1 = c_1(\epsilon) > 0$  small enough if necessary, we have \begin{equation*} 
	\begin{split}
	\left|\ln \frac{|\eta(0,x)|}{|\eta(t,x)|}\right| < \epsilon, \quad t\in [0,t_1],
	\end{split}
	\end{equation*}  recalling that $t_1 = c_1/S_1$. For the angle, we simply use \begin{equation*}
	\begin{split}
	\left| \frac{d}{dt} \theta(\eta)  \right| \lesssim S_1 
	\end{split}
	\end{equation*} to deduce \begin{equation*}
	\begin{split}
	\left| \theta(\eta(t,x))  - \theta(\eta(0,x)) \right| < \epsilon
	\end{split}
	\end{equation*} again for $t\in [0,t_1]$ by taking $c_1 > 0$ smaller if necessary. Thus, a suitable choice of $\epsilon > 0 $ (depending only on $\underline{r}_{1}, \underline{r}_{2}, \underline{\theta}_1, \underline{\theta}_2$) finishes the proof of the \textbf{Claim} for the case $k = 1$.
		
	\medskip
	
	\textit{Step III: completing the induction}
	
	\medskip
	We now assume that for some $k_0 > 1$ the \textbf{Claim} has been proved for $k = 1, \cdots, k_0-1$. We are now concerned with the trajectories $\eta(t,x)$ where $t \le t_{k_0}$ and $x \in \supp(\phi_{k_0})$. The induction hypothesis guarantees that, as long as $2^{-(k_0 + 1)} < |\eta| < 2^{-(k_0 -1)}$, we have that \begin{equation*}
	\begin{split}
	\left| \frac{d}{dt} \ln \frac{1}{|\eta|} \right| \lesssim S_{k_0} 
	\end{split}
	\end{equation*}  {simply because $t_k$ is decreasing with $k$ and the hypothesis ensures that the contribution of $a_k\phi_k \circ \eta^{-1}(t)$ to the key integral is bounded by $c a_k$ with some $c$ independent of $k$, for $k = 1, \cdots, k_0 - 1$. 
Strictly saying, here, we use the even smaller rectangles
$\underline{R}^*_k$. Thus $c$ is depending on $\epsilon>0$. 
This implies that 
 \begin{equation*} 
	\begin{split}
	\left|\ln \frac{|\eta(0,x)|}{|\eta(t,x)|}\right| < \epsilon , \quad t\in [0,t_{k_0}]
	\end{split}
	\end{equation*} for the \textit{same} $\epsilon$ and $c_1$.} Similarly, we can deduce \begin{equation*}
	\begin{split}
	\left| \theta(\eta(t,x))  - \theta(\eta(0,x)) \right| < \epsilon
	\end{split}
	\end{equation*} on $[0,t_{k_0}]$. The proof of \textbf{Claim} is complete. \end{proof}
  
\begin{remark} A few remarks are in order.
	\begin{itemize}
		\item \textit{Large Lagrangian deformation occurs at the origin}. Proposition \ref{prop:LLD-bubbles} shows that for bubbles satisfying $S_n \rightarrow \infty$ as $n \rightarrow \infty$, large Lagrangian deformation must occur, and it occurs even within a time interval that shrinks to zero for $n$ large. We emphasize that we can pinpoint the location of large Lagrangian deformation to be the origin (which was an open problem to the best of our knowledge), while using contradiction arguments it is possible (see \cite{BL,EJ}), with less work, to show existence of large Lagrangian deformation (somewhere in the domain). 

		\item \textit{Dichotomy for bubbles}. Note that in the case when the sequence $a_k$ is summable, the initial vorticity belongs to the critical Besov space $B^0_{\infty,1}$ uniformly in $n$ (for the rigorous calculation, see \cite{MY} for example). There is uniqueness and existence in this space $B^0_{\infty,1}$ (\cite{V}), which in particular guarantees that the corresponding velocity gradient is uniformly bounded in $n$ for a short time interval. Therefore, we have the following dichotomy for bubbles: short-time large Lagrangian deformation occurs \textit{if and only if} the sequence $\{ a_k \}$ is not summable. 
		
		\item \textit{Unbounded case}. Even when $\limsup_{k} a_k \rightarrow +\infty$ (i.e. when the sequence $\omega_{0,n}^L$ is not uniformly bounded in $L^\infty$), the lower bound \eqref{eq:LLD-bubbles} is still valid with a constant independent of $n$ but holds within a smaller time interval depending on $n$. One may follow the above proof except {that one needs to work with a more precise variant of \eqref{eq:keyLemma} and track the dependence of the error in $n$.}

		\item \textit{Sharpness of the growth rate}. It can be shown that with the data in \eqref{eq:bubbles}, we have \begin{equation*}
		\begin{split}
		|D\eta(t,0)| \le C(c_2),\qquad t \in [0,c_2/S_n]
		\end{split}
		\end{equation*} for any fixed constant $c_2 > 0$. This follows from the well-posedness of the Euler equations with vorticity in $B^0_{\infty,1}$ and the fact that $\nrm{\omega_{0,n}^L}_{B^0_{\infty,1}} \sim S_n$. Comparing this with \eqref{eq:LLD-bubbles}, one sees that the lower bound is sharp at least during this time scale. Hence we must wait a bit longer to see large deformation at the origin. 
		
		\item \textit{Large Lagrangian deformation in a small ball}. After appealing to an ODE argument almost identical to the one given in the proof of Proposition \ref{BC} using \eqref{eq:stretch-ODE}, it is possible to show that the growth rate stated in \eqref{eq:LLD-bubbles-shorttime} persists for points inside a small ball centered at the origin. The radius can be estimated similarly as well. 
		
		\item \textit{Hierarchical scale interaction}. In this setting of Bourgain and Li, we can see a hierarchical mechanism of multi-scale interaction 	of 2D-bubbles. We see that the behavior of large-scale bubble is not strongly affected by the smaller-scale bubbles {(as clearly demonstrated by the Key Lemma)}, however, not completely separative (c.f. the 3D-case \eqref{scale-separation-expression}). 
		
		\item \textit{Case of the continuum}. Our considerations equally apply well to the ``continuum'' version of the bubbles; that is, we may take locally \begin{equation*}
		\begin{split}
		\omega_{0,n}(r,\theta) = \varphi_{2^{-n-1}} * (g(r) \chi(\theta) ), \quad 0 \le r < 1/2
		\end{split}
		\end{equation*} where $\chi \ge 0$ on $\theta \in [0,\pi/2]$ and $\chi(\theta) = -\chi(-\theta) = -\chi(\pi-\theta)$, and $g \ge 0$ is a bounded continuous function on $[0,1/2] \rightarrow [0,1]$. Here $a_k$ corresponds to $g(2^{-k})$ and $S_k$ to $\int_{2^{-k}}^{1} g(r)r^{-1}dr$. For an example, in the case $g(r) = |\ln r|^{-1/2-\eps}$, $\omega_0 = g(r)\sin(2\theta)$ belongs to $H^1$ (considered explicitly in \cite{EJ}), and using the method in this paper one can show that the corresponding solution escapes $H^1$ without appealing to a contradiction argument. 
		
	\end{itemize}

\end{remark}

\section{Conclusion}

We prepared small-scale vortex blob and large-scale anti-parallel
vortex tubes for the initial data, and showed that the corresponding
3D Euler flow creates instantaneous vortex-stretching. {In turn, using this stretching, we showed that the corresponding 3D Navier-Stokes flow satisfies a modified version of zeroth-law}, which is the cornerstone of the
turbulence study field. {Thus this instantaneous vortex-stretching could be a key to make further progress in this field. We conjecture that an initial data which satisfies the actual 
zeroth-law should behave as in Goto-Saito-Kawahara's turbulence picture \cite{GSK}.}
In the Appendix, we mathematically formulated their turbulence picture and proved
Kolmogorov's $-5/3$-law, by assuming space-locality, scale-locality,
energy-flux in equilibrium state and space-filling (Frisch \cite{F}).

\appendix

{\section{Kolmogorov's $-5/3$-law}}

In this section we derive Kolmogorov's $-5/3$-law from GSK-type of self-similar hierarchy.
We first define energy spectrum based on  Constantin \cite{Constantin} (see also Mazzucato \cite{Mazzucato}). In order to do so, we recall the scale-locality on GSK's Navier-Stokes turbulence picture.
They say 2 to 8 times scale-interaction is dominant (scale-locality).
Thus we define $A^L_k$ as the large-scale spectrum and  $A^S_k$ as the  adjacent small-scale spectrum
involving this scale-locality property as follows: 
\begin{equation*}
A^L_k:=\{\xi\in\mathbb{R}^d:k/2\leq |\xi|\leq 2k\}
\quad\text{and}\quad A^S_k=bA^L_k
\end{equation*}
for some number $b\in \{2,3,\cdots,8\}$.

\begin{definition}[Energy spectrum based on Littlewood-Paley coarse-graining] 
Let us define Energy spectrum $E$ as follows:
\begin{equation*}
 E(t,k)=k^{-1}\int_{\mathbb{R}^d}\chi_{A^S_k\cup A^L_k}(\xi)|{\hat u}(t,\xi)|^2d\xi
\end{equation*}
where $\chi_{A^S_k\cup A^L_k}$ is the characteristic function on $A^S_k \cup A^L_k$.
\end{definition}
In order to propose a reasonable self-similar scaling assumption based of GSK's turbulence picture, we need to prepare several definitions.
Let $R_\theta$ be a 2D-rotation matrix such that
\begin{equation*}
R_\theta=
\begin{pmatrix}
\cos\theta& -\sin\theta & 0\\
\sin\theta& \cos\theta& 0\\
0& 0& 1
\end{pmatrix}.
\end{equation*}
We will use this $\theta$ as the random variable.
Let us recall
the second conclusion in GSK:
stretched vortex tubes tend to align in the direction perpendicular to larger-scale vortex tubes.
Ideally,  each hierarchy the velocities are orthogonal to the previous hierarchical level. 
However,  in the each hierarchy, randomness is already involved, so, in this paper, 
we do not take the orthogonality into account. 
Thus, we employ a set of rotation matrices $\{Q_j\}_j$ with random variables (random angles).
Let
\begin{equation*}
(R_\theta\circ \hat u)(\xi):=R_\theta\hat u(R^{-1}_\theta\xi)
\quad\text{and}\quad
(Q_j\circ \hat u)(\xi):=Q_j\hat u(Q_j^{-1}\xi).
\end{equation*}
Let $\hat U(\xi)$ ($\supp \hat U\subset  A^L_{1}\cup A^S_{1}$) be an ``snapshot of the minimum piece of ideal turbulence" (c.f. Subsection \ref{single bubble}),
and we assume the following symmetry:
\begin{equation*}
(R_{\pi/2}\circ\hat U)(\xi)=-\hat U(\xi).
\end{equation*}
For the corresponding probability distribution $\mu$, we assume a translational symmetry: 
\begin{equation*}
\mu([0,\theta))=\mu([\pi/2,\pi/2+\theta))\quad\text{for}\quad
\theta\in[0,2\pi).
\end{equation*}
These symmetries are corresponding to ``anti-parallel pairs of vortex tubes" in GSK's Navier-Stokes turbulence picture.
 Let us define Energy spectrum of $U$ as follows:
\begin{equation*}
 E^U=\int_{\mathbb{R}^d}\chi_{A^S_1\cup A^L_1}(\xi)|{\hat U}(\xi)|^2d\xi.
\end{equation*}
\begin{corollary}\label{mean-zero}
By the above symmetries, we see 
\begin{equation*}
\begin{split}
&\mathbb{E}[R_\theta\circ \hat U]
=\int_0^{2\pi}(R_\theta\circ \hat U) \mu(d\theta)\\
&=\int_0^{\pi/2}+\int_{\pi/2}^{\pi}+\int_{\pi}^{3\pi/2}+\int_{3\pi/2}^{2\pi}=
\int_0^{\pi/2}-\int_0^{\pi/2}+\int_0^{\pi/2}-\int_0^{\pi/2}=0.
\end{split}
\end{equation*}
Clearly $\mathbb{E}[|R_\theta\circ \hat U|^2]<\infty$.
\end{corollary}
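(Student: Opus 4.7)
The plan is a direct symmetry unfolding. The two inputs are the $\pi/2$-translational invariance of $\mu$ and the flip relation $R_{\pi/2}\circ\hat U = -\hat U$, and both can be iterated in a clean way.

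First I would derive the iterated rotation identity
\begin{equation*}
R_{\theta+\pi/2}\circ\hat U \;=\; -\,R_\theta\circ\hat U
\end{equation*}
valid for every $\theta$. This follows by unwinding the definition $(R_\alpha\circ\hat U)(\xi) = R_\alpha\hat U(R_\alpha^{-1}\xi)$, writing $R_{\theta+\pi/2} = R_\theta R_{\pi/2}$, and substituting $\eta = R_{-\theta}\xi$ so that the flip hypothesis $R_{\pi/2}\circ\hat U = -\hat U$ appears inside. Consequently the orbit of $\hat U$ under shifts by multiples of $\pi/2$ reads $\hat U,\,-\hat U,\,\hat U,\,-\hat U$. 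The translational hypothesis on $\mu$ likewise iterates: the pushforward of $\mu|_{[0,\pi/2)}$ under translation by $j\pi/2$ equals $\mu|_{[j\pi/2,(j+1)\pi/2)}$ for $j=0,1,2,3$.

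Next I would split $\int_0^{2\pi}$ into its four quarter-interval integrals, change variables $\theta = \theta' + j\pi/2$ on each piece, and invoke both iterated identities. Each piece becomes $(-1)^j \int_0^{\pi/2}(R_{\theta'}\circ\hat U)\,\mu(d\theta')$, and the alternating signs cancel in pairs. This is precisely the chain of equalities displayed in the statement of the corollary.

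For the second claim, orthogonality of $R_\theta$ gives the pointwise identity $|(R_\theta\circ\hat U)(\xi)| = |\hat U(R_{-\theta}\xi)|$; integrating its square against the probability measure $\mu$ is dominated by $\|\hat U\|_{L^\infty}^2$ (or $\|\hat U\|_{L^2}^2$ if one views the expectation globally), and finiteness follows at once. The only step that requires a moment of care is the algebraic identity $R_{\theta+\pi/2}\circ\hat U = -R_\theta\circ\hat U$, since the hypothesis asserts the flip only at $\theta=0$; the rest is bookkeeping with the measure.
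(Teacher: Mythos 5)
Your proposal is correct and follows essentially the same route as the paper: the displayed chain of equalities in the corollary is exactly the four-quarter splitting plus the alternating-sign cancellation you derive, with your identity $R_{\theta+\pi/2}\circ\hat U=-R_\theta\circ\hat U$ and the iterated translation invariance of $\mu$ being the (correct) details the paper leaves implicit. The finiteness of $\mathbb{E}[|R_\theta\circ\hat U|^2]$ via orthogonality of $R_\theta$ likewise matches the paper's ``clearly'' remark.
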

Let $n$ be a natural number such that $k^d\sim b^{nd}$ ($d$ is dimension), so, $n\sim(\log k)/(\log b)$.
This $n$ exhibits the number of hierarchy.
Now we give a reasonable scaling assumption. Let
$\alpha\in\mathbb{R}$, $\{y(j)\}_{j}\subset \mathbb{R}^d$, $t>0$, $k>1$, 
rotation matrices $\{R_{\theta(j)}\}_{j}$, $\{Q_j\}_j$, assume $\hat u$ satisfies
\begin{equation}\label{scaling assumption}
\text{scaling assumption:}\quad \hat u(t,k\xi)=\sum_{j=1}^{b^{nd}}
 k^\alpha Q_{j}\circ R_{\theta(j)}\circ{\hat U}(\xi)e^{ik^{-1}y(j)\cdot \xi},
\end{equation}
for  $\xi\in  A^L_{1}\cup A^S_{1}$. We will determine the reasonable relation  between $k$ and $t$ later.
\begin{remark}
By the inverse Fourier transform, we have 
\begin{equation*}
k^{-d}u(t,k^{-1}x)=\sum_{j=1}^{b^{nd}}
k^{\alpha}(Q_j\circ R_{\theta(j)}\circ U)(x-k^{-1}y(j)).
\end{equation*}
\end{remark}

At this stage, $\alpha$ is unknown, so, first we need to figure out the exact value of $\alpha$.
Summing $b^{nd}$-elements, that is, $k^d$-elements comes from space-filling. See \cite[Section 7.3 and Figure 7.2]{F}. It says that the cascade according to
the Kolmogorov 1941 theory,
at each step the eddies are space-filling. 
For convenience of readers, we cite the most important paragraph from \cite{F}:
\vspace{0.5cm}
\begin{center}
\textit{Pheomenology of turbulence: The eddies of various sizes are represented as blobs stacked in
decreasing sizes.
The upper most eddies have a scale $\sim \ell_0$. The successive
generations of eddies
have scales $\ell_n\sim \ell_0r^n$ $(n=0,1,2\cdots)$, where $0<r<1$.
The value $r=1/2$ is the most common choice, but the exact value has no meaning.
The smallest eddies have scales $\sim\eta$, the Kolmogorov dissipation scale.
The number of eddies per unit volume is assumed to grow with $n$ as
$r^{-3n}$ to ensure that small eddies are as space-filling as large
ones.}
\end{center}
\vspace{0.5cm}

\begin{remark}
However from our vortex-stretching result  (Subsection \ref{single bubble}),
\begin{equation*}
|y(j)-y(j')|\lesssim 1\quad (j\not =j')
\end{equation*}
seems rather natural (not homogeneously distributing in the upper most eddies region). 
In this case summing  less than $b^{nd}$-elements
is rather suitable. This should be corresponding to the $\beta$-model (see \cite[Subsection 8.5]{F}).
Thus we need to suitably adjust various assumptions here to the $\beta$-model (intermittency), and this is our future work.
\end{remark}
\begin{definition}
Let us  define energy-flux as follows (this is based on \eqref{LP-version}):
\begin{equation*}
\Pi_k^{\Theta}(t):=
\int_{\mathbb{R}^d}\int_{\mathbb{R}^d}(\chi_{A^S_k}\hat u(t,\xi-\eta)\cdot i\xi)\chi_{A^S_k}\hat u(t,\eta)\chi_{A^L_k}\hat u(t,-\xi)d\eta d\xi,
\end{equation*}
where $\Theta:=(\{Q_j\}_j, \{\theta(j)\}_{j},\{y(j)\}_{j})$
are random variables, and assume that, at least, $\{\theta(j)\}_{j}$ are independent (this energy-flux is 
implicitly depending on $\Theta$).
Also let us define
\begin{equation*}
\Pi^{U}:=
\int_{\mathbb{R}^d}\int_{\mathbb{R}^d}(\chi_{A^S_1}\hat U(\xi-\eta)\cdot i\xi)\chi_{A^S_1}\hat U(\eta)\chi_{A^L_1}\hat U(-\xi)d\eta d\xi.
\end{equation*}
Similarly we define the three-wave-interaction $J$:
\begin{equation*}
J(\hat u,\hat v,\hat w):=
\int_{\mathbb{R}^d}\int_{\mathbb{R}^d}(\chi_{A^S_k}\hat u(t,\xi-\eta)\cdot i\xi)\chi_{A^S_k}\hat v(t,\eta)\chi_{A^L_k}\hat w(t,-\xi)d\eta d\xi.
\end{equation*}
\end{definition}
We now give an energy-flux assumption as follows:

\begin{equation}\label{energy-flux assumption}
\text{energy-flux assumption:}\quad 
\lim_{N\to\infty}\frac{\sum_{\ell=1}^N\Pi_{k}^{\Theta_\ell}(t)}{N}=\Pi^U=:\epsilon>0
\quad \text{for any}\quad t>0, 
\end{equation}
in stochastic convergence (c.f. \eqref{forward energy cascade}). Again, we determine the reasonable relation between $k$ and $t$ later.
This energy-flux assumption is rather natural in the turbulence study field:
``energy-flux" is independent of scale and equal to energy input/output, see \cite[Subsection 6.2.4]{F}.
\begin{remark}
However 
in GSK point of view, ``$t>0$" in \eqref{energy-flux assumption} is not accurate. Rigorously, we need to take ``quasi-periodicity" into account. See \cite[Section E]{GSK}.
We briefly mention the relation between $t$ and $n$ (the number of hierarchy) later.
\end{remark}
The energy spectrum $E$ is also implicitly depending on the  random variables $\Theta$.
We sometimes denote $E^{\Theta}(=E)$ to emphasize these dependences.
Then we now derive the Kolmogorov's $-5/3$-law.
\begin{theorem}
Assume the scaling-assumption and energy-flux-assumption. Then
we have 
\begin{equation*}
\frac{\sum_{\ell=1}^NE^{\Theta_\ell}(t,k)}{N}\to k^{-5/3}E^U\quad (N\to\infty)
\end{equation*}
in stochastic convergence.
\end{theorem}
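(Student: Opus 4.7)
The plan is to compute the large-$N$ averages of $E^\Theta(t,k)$ and of $\Pi_k^\Theta(t)$ separately using only the scaling assumption \eqref{scaling assumption}, and then to combine the two scaling relations via the energy-flux assumption \eqref{energy-flux assumption} to pin down the unknown exponent $\alpha$.

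First I would rescale $\xi=k\xi'$ in the integral defining $E^\Theta(t,k)$; the rotationally symmetric annulus $A_k^L\cup A_k^S$ becomes $A_1^L\cup A_1^S$ and produces a Jacobian $k^d$, giving
\begin{equation*}
E^\Theta(t,k)=k^{d-1+2\alpha}\int_{A_1^L\cup A_1^S}\Big|\sum_{j=1}^{b^{nd}}(Q_j\circ R_{\theta(j)}\circ\hat U)(\xi')\,e^{ik^{-1}y(j)\cdot\xi'}\Big|^2 d\xi'.
\end{equation*}
Expanding the square yields diagonal ($j=j'$) and cross ($j\ne j'$) contributions. Each diagonal term equals $|\hat U(R_{\theta(j)}^{-1}Q_j^{-1}\xi')|^2$ because rotations preserve norms; after the substitution $\xi'\mapsto Q_jR_{\theta(j)}\xi'$ and using rotational symmetry of the annulus, each integrates to $E^U$. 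For $j\ne j'$, the integrand factors, by independence of $\{\theta(j)\}$, into a product whose individual expectations vanish by Corollary \ref{mean-zero}; so for each fixed $k$ (hence fixed finite $n$) the sample mean of the cross terms tends to zero in probability as $N\to\infty$ by Chebyshev, with finite variance guaranteed by the second statement of Corollary \ref{mean-zero}. Counting $b^{nd}=k^d$ diagonal terms yields
\begin{equation*}
\frac{1}{N}\sum_{\ell=1}^N E^{\Theta_\ell}(t,k)\;\xrightarrow[N\to\infty]{}\;k^{2d-1+2\alpha}\,E^U\qquad\text{stochastically.}
\end{equation*}

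Next I would repeat the same analysis for $\Pi_k^\Theta(t)$. Rescaling $\xi=k\xi'$, $\eta=k\eta'$ produces an overall prefactor $k^{2d+1+3\alpha}$ (from two $d$-dimensional measures, the factor $i\xi$, and three copies of $k^\alpha$). Expanding the triple sum yields the phase $\exp(ik^{-1}[y(j_1)\cdot(\xi'-\eta')+y(j_2)\cdot\eta'-y(j_3)\cdot\xi'])$, which collapses to $1$ exactly on the full-diagonal $j_1=j_2=j_3$. For every other multi-index (all three distinct, or two equal and one different), the integrand contains at least one independent factor $R_{\theta(j)}\circ\hat U$ with mean zero, so the sample mean of those terms again goes to zero stochastically. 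The rotational invariance of the trilinear form $\int\!\!\int(\hat v(\xi'-\eta')\cdot i\xi')\hat v(\eta')\cdot\hat v(-\xi')\,d\eta'\,d\xi'$ (both annuli and the form $\hat v(\xi'-\eta')\cdot i\xi'$ transform covariantly under $R$) makes each of the $b^{nd}=k^d$ diagonal terms equal $\Pi^U$, giving
\begin{equation*}
\frac{1}{N}\sum_{\ell=1}^N\Pi_k^{\Theta_\ell}(t)\;\xrightarrow[N\to\infty]{}\;k^{3d+1+3\alpha}\,\Pi^U.
\end{equation*}

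Finally, the energy-flux assumption \eqref{energy-flux assumption} forces $k^{3d+1+3\alpha}\Pi^U=\epsilon=\Pi^U$ for all $k>1$, which determines $\alpha=-d-\tfrac{1}{3}$. Substituting into the spectrum identity gives $2d-1+2\alpha=-\tfrac{5}{3}$, and therefore
\begin{equation*}
\frac{1}{N}\sum_{\ell=1}^N E^{\Theta_\ell}(t,k)\;\longrightarrow\;k^{-5/3}\,E^U\qquad\text{stochastically,}
\end{equation*}
which is precisely Kolmogorov's $-5/3$ law. The main obstacle is the rigorous justification of stochastic convergence of the off-diagonal contributions in both the $E$ and $\Pi_k$ calculations: one must verify that the variance of each cross term is finite and bounded uniformly in $j,j'$ (or in $(j_1,j_2,j_3)$) so that Chebyshev's inequality yields convergence in probability for fixed $k$; everything else is an orthogonality calculation driven by the two symmetries (rotation invariance of $A_1^{L,S}$ and the zero-mean property from Corollary \ref{mean-zero}) and the combinatorial identity $b^{nd}=k^d$ encoded by the space-filling hypothesis.
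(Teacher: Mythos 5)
Your proposal is correct and follows essentially the same route as the paper's own proof: rescale by $k$, isolate the resonant (fully diagonal) terms which contribute $k^{d}$ copies of $\Pi^U$ and $E^U$ respectively, kill the off-diagonal terms in sample mean via the mean-zero symmetry of Corollary \ref{mean-zero} and independence of the $\theta(j)$, and use the energy-flux assumption to fix $\alpha=-(3d+1)/3$, whence $2d-1+2\alpha=-5/3$. The only (cosmetic) difference is that you justify the stochastic convergence of the off-diagonal contributions via Chebyshev with a finite-variance check, where the paper simply invokes the Law of Large Numbers.
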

\begin{remark}
By applying the usual H\"older and Young inequalities, \begin{equation*}
\epsilon  = \Pi^U=
J(\hat U,\hat U,\hat U)
\lesssim 
\|\chi_{A^L_1\cup A^S_1}\hat U\|_{L^2}^3
=(E^U)^{3/2}.
\end{equation*}
Thus there is a constant $C>0$ independent of $k$ (depending on 
the H\"older and Young inequalities themselves)
such that the following inequality holds:
\begin{equation*}
E(t,k)\geq C\epsilon^{2/3}k^{-5/3}.
\end{equation*}
To figure out the best upper bound, we may need to look into $U$ seriously. It should be interesting to figure out the relation between $U$ and the Kolmogorov constant.
\end{remark}

\begin{remark}
By dimensional analysis: $E\sim(length)^3/(time)^2$,  each stretching time can be estimated as  $\sim \epsilon^{-1/3}k^{-2/3}\sim \epsilon^{-1/3}b^{-2n/3} $ (see Goto \cite{G}).
Thus the time $t$ can be estimated as $t\sim \epsilon^{-1/3}\sum_{j=1}^{\log_b k}b^{-2/3j}\sim\sum_{j=1}^{n}b^{-2/3j}$.
This discrete summation must be related to quasi-periodicity \cite[Section E]{GSK}.
\end{remark}

\begin{proof}

Let $h_k\circ \hat U(\xi)=\hat u(t, k\xi)$.
A direct calculation yields
\begin{equation*}
\begin{split}
&\Pi^{\Theta}_k(t) =
J(\hat u(t),\hat u(t),\hat u(t))
=
k^{2d+1}J(h_k\circ\hat u(t),h_k\circ\hat u(t),h_k\circ\hat u(t))\\
&=
k^{2d+1+3\alpha}
J\bigg(\sum_{j_1=1}^{b^{nd}}
Q_{j_1}\circ R_{\theta(j_1)}\circ \hat Ue^{ik^{-1}y(j_1)\cdot \xi}, \sum_{j_2=1}^{b^{nd}}
Q_{j_2}\circ R_{\theta(j_2)}\circ \hat Ue^{ik^{-1}y(j_2)\cdot \xi},
\sum_{j_3=1}^{b^{nd}}
Q_{j_3}\circ R_{\theta(j_3)}\circ \hat Ue^{ik^{-1}y(j_3)\cdot \xi}\bigg).\\
\end{split}
\end{equation*}
By change of variables, we have 
\begin{equation*}
\begin{split}
&
J(Q_{j_1}\circ R_{\theta(j_1)}\circ \hat Ue^{ik^{-1}y(j_1)\cdot \xi},
Q_{j_2}\circ R_{\theta(j_2)}\circ \hat Ue^{ik^{-1}y(j_2)\cdot \xi}, 
Q_{j_3}\circ R_{\theta(j_3)}\circ \hat Ue^{ik^{-1}y(j_3)\cdot \xi})
=J(\hat U,\hat U,\hat U)
\end{split}
\end{equation*}
for $j_1=j_2=j_3$.
Thus
by  
decomposing into resonant and non-resonant parts, we have 
\begin{equation*}
\begin{split}
&\Pi^{\Theta}_k(t)  =
k^{2d+1+3\alpha+d}
J(\hat U,\hat U,\hat U) \\
& +  k^{2d+1+3\alpha}\sum_{\stackrel{j_1,j_2,j_3=1}{j_1\not=j_2\ \text{or}\ j_2\not=j_3}}^{b^{nd}}
J\bigg(Q_{j_1}\circ R_{\theta(j_1)}\circ \hat Ue^{ik^{-1}y(j_1)\cdot \xi},  Q_{j_2}\circ R_{\theta(j_2)}\circ \hat Ue^{ik^{-1}y(j_2)\cdot \xi},
Q_{j_3}\circ R_{\theta(j_3)}\circ \hat Ue^{ik^{-1}y(j_3)\cdot \xi}\bigg)\\
&\quad\quad  =:
k^{2d+1+3\alpha+d}
\Pi^U
+I^{\Theta}.
\end{split}
\end{equation*}
From Corollary \ref{mean-zero} and independence of random variables $\theta(j)$,
we see that $\mathbb{E}[I^{\Theta}]=0$.
Thus by Law of Large Numbers, we have 
\begin{eqnarray*}
\sum_{\ell=1}^N\frac{\Pi_k^{\Theta_\ell}(t)}{N}&\to&
k^{2d+1+3\alpha+d}
\Pi^U
\quad (N\to\infty)
\end{eqnarray*}
in stochastic convergence.
By \eqref{energy-flux assumption}, we have  $\alpha=-(1+3d)/3$.
Now we compute the energy spectrum. Let 
\begin{equation*}
L(\hat u,\hat v):=\int_{\mathbb{R}^d}\chi_{A^S_k\cup A^L_k}(\xi)\hat u(\xi)\cdot \bar {\hat v}(\xi)d\xi.
\end{equation*}
By a similar calculation as in the $\Pi^{\Theta}_k$ case, we have 
\begin{equation*}
\begin{split}
 &
E^{\Theta}(t,k)= k^{-1}L(\hat u(t),\hat u(t))
=
k^{-1+d}L(h_k\circ\hat u(t),h_k\circ\hat u(t))\\
&=
k^{-1+d}L\bigg(
\sum_{j_1=1}^{b^{nd}}
Q_{j_1}\circ R_{\theta(j_1)}\circ \hat Ue^{ik^{-1}y(j_1)\cdot \xi},
\sum_{j_2=1}^{b^{nd}}
Q_{j_2}\circ R_{\theta(j_2)}\circ \hat Ue^{ik^{-1}y(j_2)\cdot \xi}
\bigg).\\
\end{split}
\end{equation*}
Again, by change of variables, we have 
\begin{equation*}
L(Q_{j_1}\circ R_{\theta(j_1)}\circ \hat Ue^{ik^{-1}y(j_1)\cdot \xi},
Q_{j_2}\circ R_{\theta(j_2)}\circ \hat Ue^{ik^{-1}y(j_2)\cdot \xi})
=L(\hat U,\hat U)
\end{equation*}
for $j_1=j_2$.
Thus we have 
\begin{equation*}
\begin{split}
E^{\Theta}(t,k) &=
k^{-1+2d}L(k^{\alpha}
\hat U,k^{\alpha}
\hat U) +\sum_{\stackrel{j_1,j_2=1}{j_1\not=j_2}}^{b^{nd}}
L(Q_{j_1}\circ R_{\theta(j_1)}\circ \hat Ue^{ik^{-1}y(j_1)\cdot \xi},
Q_{j_2}\circ R_{\theta(j_2)}\circ \hat Ue^{ik^{-1}y(j_2)\cdot \xi}
)\\
&=:
k^{-1+2d+2\alpha}L(\hat U,\hat U)+\tilde I^{\Theta}
=k^{-5/3}E^U+\tilde I^{\Theta}.
\end{split}
\end{equation*}
Again, from Corollary \ref{mean-zero} and independence of random variables $\theta(j)$,
we see that $\mathbb{E}[\tilde I^{\Theta}]=0$. Therefore by Law of Large Numbers, we finally obtain
\begin{eqnarray*}
\sum_{\ell=1}^N\frac{E^{\Theta_\ell}(t,k)}{N}&\to&
k^{-5/3}
E^U\quad (N\to\infty)
\end{eqnarray*}
in stochastic convergence.
\end{proof}


\vspace{0.5cm}
\noindent
{\bf Acknowledgments.}\ 
{We thank Professors A. Mazzucato and T. D. Drivas for inspiring communications and telling us about the articles \cite{CD} and \cite{DE}, respectively. We are also grateful to Professors P. Constantin and T. Elgindi for valuable comments.}
Research of TY  was partially supported by 
Grant-in-Aid for Young Scientists A (17H04825),
Japan Society for the Promotion of Science (JSPS). IJ has been supported by the POSCO Science Fellowship of POSCO TJ Park Foundation. 
\bibliographystyle{amsplain} 

\end{document}